\newtheorem{thm}{Theorem}[section]
\newtheorem{corollary}[thm]{Corollary}
\newtheorem{definition}[thm]{Definition}
\newtheorem{lemma}[thm]{Lemma}
\newtheorem{example}[thm]{Example}
\newtheorem{rmk}[thm]{Remark}
\title{An extension theorem from connected sets\\ and homogenization of non-local functionals }
\author{ Andrea Braides\footnote{Dipartimento di Matematica
 Universit\`a di Roma ``Tor Vergata'', via della ricerca scientifica 1, 00133 Rome, Italy {\tt e-mail:braides@mat.uniroma2.it}}\hspace{0.2cm}, Valeria Chiad\`o Piat  and  Lorenza D'Elia\footnote{Dipartimento di Scienze Matematiche, Politecnico di Torino
 Corso Duca degli Abruzzi 24,
10129 Torino, Italy {\tt e-mail:valeria.chiadopiat@polito.it, lorenza.delia@polito.it}}
}
\date{}                                           
\def\e{\varepsilon}
\def\xe{{\tfrac{x}{\varepsilon}}}
\def\R{{\mathbb R}}
\def\Z{{\mathbb Z}}
\def\N{{\mathbb N}}
\def\hom{{\rm hom}}
\def\loc{{\rm loc}}
\def\tto{{\buildrel{\tau}\over{\longrightarrow}}} 
\def \trait (#1) (#2) (#3){\vrule width #1pt height #2pt depth #3pt}
\def \qed{\hfill
        \trait (0.1) (6) (0)
        \trait (6) (0.1) (0)
        \kern-6pt
        \trait (6) (6) (-5.9)
        \trait (0.1) (6) (0)
\medskip}
\DeclareMathOperator{\dist}{dist}
\def\a{\alpha}
\def\omp{\Omega^\prime}
\def\meu{\overline{u}}
\def\ua {u^\alpha}
\def\psia{\psi^\alpha}
\def\ub{u^\beta}
\def\rd{\mathbb{R}^d}
\def\calR{{\cal R}}
\def\DR{{D_R}}
\begin{document}
\maketitle

\smallskip
\noindent
{\bf Abstract.} 
{We study the asymptotic behaviour of convolution-type functionals defined on general periodic domains by proving an extension theorem.

\smallskip
\noindent
{\bf Keywords:} homogenization, perforated domains, non-local functionals, extension operators}

\smallskip
\noindent
{\bf AMS Classifications.} 49J45, 49J55, 74Q05, 35B27, 35B40, 45E10

\section{Introduction}
In this paper we consider energies of convolution-type whose prototypes are functionals of the form
      \begin{equation}
      \label{energies}
         \frac{1}{\varepsilon^{d+p}}\int_{\Omega\times\Omega} a\left(\frac{y-x}{\varepsilon}   \right)|u(y)-u(x)|^pdx\,dy,
         \end{equation}
where $a$ is a non-negative convolution kernel,  $p\in (1,+\infty)$, $\varepsilon$ is a scaling parameter and $\Omega$ is a Lipschitz domain in $\rd$. 
The kernel $a:\rd\rightarrow [0,+\infty[$, 
 describing  the strength of the interaction at a given distance,  satisfies
\begin{equation}
    \label{c1}
    \int_{\R^d}a(\xi)(1+|\xi|^{p})\, d\xi<+\infty,
    \end{equation}
and
     \begin{equation}
     \label{c2}
     a(\xi)\geq c>0, \qquad\text{if}\quad |\xi|\leq r_0,
     \end{equation}
for some $r_0>0$ and $c>0$.

Functionals of this form have been used as an approximation of the $L^p$-norm of the gradient as $\e\to 0$ and as such give an alternative way of defining Sobolev spaces (see {\it e.g.}~\cite{AB,BBM}).
In the case $p=2$ perturbations of such energies \eqref{energies} arise from models in population dynamics where the macroscopic properties are reduced to studying the evolution of the first-correlation function describing the population density $u$ in the system \cite{FKK}, and recently they have also been used in problems in Data Science \cite{GS}. Furthermore discrete versions of such energies have been extensively studied in a general setting (see {\it e.g.}~\cite{AC,BLL} and related works).

A rather complete analysis of perturbations of functionals \eqref{energies}, more precisely, of functionals that are dominated from below and above by functionals of type \eqref{energies}, is presented in \cite{AABPT}. 
In this paper we consider another type of perturbation of \eqref{energies} in the framework of the so-called {\em perforated domains}, that cannot be reduced to the analysis in \cite{AABPT} since it is `degenerate' on the complement of a periodic connected set.

In our analysis we consider a typical situation arising in the study of inhomogeneous media with a periodic microstructure, when one sets the model in a domain obtained by removing inclusions representing sites with which the system does not interact. Usually, such a periodically perforated domain is obtained by intersecting $\Omega$ with a periodic open subset $E_\delta = \delta E$ of $\rd$, where $E$ is a periodic set with Lipschitz boundary and $\delta$ is the (small) period of the microstructure. In the setting of energies \eqref{energies} the relevant scale of the period $\delta$ is of order $\varepsilon$. Indeed, in the other cases we have a multi-scale problem that can be decomposed into two separate limit analyses that fall within known results corresponding to letting first $\delta\to0$ and then $\e\to0$, or the converse (see \cite{BP_per}). Hence, we will consider energies whose prototypes are of the form
      \begin{equation}
      \label{funperiod}
      F_{\varepsilon}(u) = \frac{1}{\varepsilon^{d+p}}\int_{(\Omega\cap \varepsilon E)\times (\Omega\cap \varepsilon E)} a\left(\frac{y-x}{\varepsilon}\right)|u(x)-u(y)|^pdy\,dx,
      \end{equation}
where $\Omega$ is a fixed domain in $\R^d$.
      
 In order to study the asymptotic analysis of such energies, it is necessary to prove that sequences with equi-bounded energy (and equi-bounded $L^p$-norm) are precompact. For the analog energy on Sobolev spaces
         \begin{equation*}
      F^{\rm Sob}_{\varepsilon}(u) = \int_{\Omega\cap \varepsilon E}|\nabla u|^pdy\,dx.
      \end{equation*}  this has been done in \cite{ACDP} through the construction of suitable extension operators $T_\e: 
      L^p(\Omega\cap\e E)\to L^p(\Omega)$ which, for each  $\Omega'$ compactly contained in $\Omega$, provide an embedding of $W^{1,p}(\Omega')$ in $W^{1,p}(\Omega\cap\e E)$ uniformly for $\e$ small enough (below a threshold explicitly depending on the distance between $\Omega'$ and $\partial\Omega$). The compact embedding of $W^{1,p}(\Omega')$ in $L^p(\Omega')$ then provides the desired compactness property.
       In our case, since the energies are non-local, a more complex statement is necessary. After noting that by condition \eqref{c2} it is sufficient to prove compactness when $a$ is the characteristic function of a ball centered in $0$ and given radius $r_0$, we prove the existence of extension operators $T_\e: L^p(\Omega\cap\e E)\to L^p(\Omega)$  with the property that $R$ and $C$ exists such that for each $\Omega'$ compactly contained in $\Omega$,
         \begin{eqnarray}\nonumber
&&\int_{\Omega'\times \Omega'} \chi_{B_R}\left(\frac{y-x}{\varepsilon}\right)|T_\e u(x)-T_\e u(y)|^pdy\,dx\\
&&\le C
\int_{(\Omega\cap \varepsilon E)\times (\Omega\cap \varepsilon E)} \chi_{B_{r_0}}\left(\frac{y-x}{\varepsilon}\right)|u(x)-u(y)|^pdy\,dx,\label{imminent}
\end{eqnarray}
for $\e$ small enough, with $C$ and $R$ independent of $\e$ (here $B_\rho$ denotes the ball of centre $0$ and radius $\rho$ and $\chi_A$ is the characteristic function of the set $A$). 
The precise statement  of this result is given in Theorem \ref{ext-thm}. 
It provides a uniform bound for energies of the type \eqref{energies} on $\Omega'$ in terms of energies \eqref{funperiod}, which in turn allows to apply the compactness results in \cite{AABPT} (see Section \ref{sect-comp}). Moreover, the asymptotic analysis of functionals \eqref{energies} ensure that limits of functions with equibounded energies are in $W^{1,p}(\Omega')$ with a uniform bound and hence they belong to $W^{1,p}(\Omega)$. 

The case $p=2$ in \eqref{funperiod} and with compact perforations; {\it i.e.}, with $E$ of the form $E=\rd\setminus (K_0+\mathbb{Z}^d)$, where $K_0$ is a compact subset of $\rd$ with Lipschitz boundary such that $(K_0+i)\cap (K_0+j)=\emptyset$ if $i,j\in\mathbb{Z}^d$ and $i\neq j$, has been studied in \cite{BP_per}, together with some variants that allow to consider random perforations \cite{BP_ran}. The main feature of our paper is the proof of the extension theorem under the only assumption that the periodic set $E$ is connected and with Lipschitz boundary, and holds for any $p>1$. The construction of $T_\varepsilon$ is inspired by the arguments of \cite{ACDP}, consisting in proving a local extension result on cubes and then using a periodic partition of  the unity. The non-locality of the energies adds further technical difficulties to the possible non-connectedness or non-regularity of the restriction of $E$ to cubes, already present in the case of Sobolev functions, and forces the introduction of the radius of interaction $R$ in inequality \eqref{imminent}. 

As an application, we study the asymptotic behaviour of energies of the form 
   \begin{equation*}
   H_\e(u) = {1\over \e^d}\int_{(\Omega\cap \e E)^2} h\left({x\over\e},{y\over \e},\frac{u(y)-u(x)}{\e}\right)dx\,dy,
   \end{equation*}
with $u\in L^p(\Omega;\R^m)$, upon some structure hypotheses on $h$  as those considered in \cite{AABPT}, that allow $H_\e$ to be compared with $F_\e$. In Section \ref{sect-hom} we obtain a homogenization theorem for $H_\e$ as $\e\to 0$ proving that the $\Gamma$-limit of $H_\e$ is defined on $W^{1,p}(\Omega;\R^m)$ and has a standard local form 
$$
\int_\Omega h_{\rm hom}(Du)dx,
$$
with $h_{\rm hom}$ characterized by non-local homogenization formulas and of $p$-growth by \eqref{c1} and \eqref{c2}. The proof is obtained by a perturbation argument that allows to use homogenization theorems proved in \cite{AABPT} for the corresponding energies defined on `solid' domains, applied to functionals of the form $H_\e+\delta F_\e$. The Extension Theorem  provides uniform estimates that allow to invert the passage to the limit as $\e\to 0$ and $\delta\to0$. We note that a discrete analog of this result can be found in \cite{BCP}, where the discrete setting allows easier extension results from the discrete version of a perforated domain.



\smallskip
Before stating and proving the main result we gather some of the notation used in the following.

\subsubsection*{Notation}\label{sect-setup}
\begin{itemize}
   \item $Q= (0, 1)^d$ denotes the unit cube in $\rd$.
   \item $\chi_A$ denotes the characteristic function of the set $A$.
   \item $\lfloor t\rfloor$ denotes the integer part of $t\in\R$.
   \item  $\mathbf{M}^{m\times d}$ is the space of $m\times d$ real matrices.
   \item if $\Xi\in\mathbf{M}^{m\times d}$ and $x\in\R^d$ then $\Xi x\in\R^m$ is defined by the usual row-by-column product. 
   \item For any open set $\Omega\subset\R^d$ and for any $\lambda>0$, $\lambda\Omega$ denotes the $\lambda$-homothetic set
             \begin{equation*}
             \lambda\Omega := \{\lambda x\hspace{0.03cm}:\hspace{0.03cm}x\in\Omega\},
             \end{equation*}
   and $\Omega(\lambda)$ is the retracted set 
        \begin{equation}\label{retract}
        \Omega(\lambda) :=\{ x\in\Omega\hspace{0.03cm}:\hspace{0.03cm}  \dist (x, \partial\Omega) >\lambda\}.
        \end{equation} 
   \item For $R>0$, $D_R$ denotes the set of points in $\R^d\times \R^d$ whose distance is less than $R$; \textit{i.e.}, 
       \begin{equation*}
       D_R:= \{(x,y)\in\R^d\times\R^d\hspace{0.03cm}:\hspace{0.03cm}|x-y|\leq R \}.
       \end{equation*}
   \item Given an open set with finite Lebesgue measure $|A|<\infty$, the mean value of $u$ over $A$ is given by 
   \begin{equation}\label{mean}
   u_A=\frac1{|A|}\int_Au(x)\, dx.
   \end{equation}

\item We say that a set $E\subset\mathbb{R}^d$ is periodic (more precisely, $Q$-periodic) if $E+e_i=E$ for every $i=1,2,\cdots, d$ where $(e_i)_{i=1}^{d}$ is the canonical basis of $\mathbb{R}^d$.
   \end{itemize}
    
%


%

\section{The extension theorem}\label{sect-lemmi}
In this section, we prove the existence of an extension operator
for non-local functionals defined on general connected domains. The main result of the paper is Theorem \ref{ext-thm}, from which we deduce a compactness result in Section \ref{sect-comp}.
Before stating it, we recall the definition of a set with Lipschitz boundary.

\begin{definition} An open set $E\subset\mathbb{R}^n$ has {\em Lipschitz boundary at $x\in\partial E$} if $\partial E$ is locally the graph of a Lipschitz function, in the sense that there exist a coordinate system $(y_1,\dots, y_d)$, a Lipschitz function $\Phi$ of $d-1$ variables, and an open rectangle $U_x$ in the $y$-coordinates, centred at $x$, such that $E\cap U_x=\{y\hspace{0.02cm} : \hspace{0.02cm} y_n<\Phi(y_1,\dots,y_{d-1})   \}$ and that $\partial E$ splits $U_x$ into two connected sets, $E\cap U_x$ and $U_x\setminus\overline{E}$. If this property holds for every $x\in\partial E$ with the same Lipschitz constant, we say that $E$ has {\em Lipschitz boundary}.
\end{definition}

   \begin{thm}\label{ext-thm}
   Let $E$ be a periodic open subset of $\R^d$ with Lipschitz boundary and let $\Omega$ be a bounded open subset of $\R^d$. Then, there exist $R=R(E)>0$  and  $k_0>0$ such that for all $\e>0$ there exists a linear and continuous extension operator
   $T_\e:L^p(\Omega\cap\e E)\to L^p (\Omega)$ such that for all $r>0$ and
   for all $u\in L^p(\Omega\cap\e E)$,
   \begin{equation}\label{1e}
   T_\e(u)=u \quad{\rm  a.e.~\hspace{0.2cm} in}\hspace{0.3cm}\Omega\cap\e E,
   \end{equation}
   \begin{equation}\label{2e}
   \int_{\Omega(\e k_0)} |T_\e (u)|^p\, dx\le c_1\int_{\Omega\cap \e E} | u|^p\, dx,
   \end{equation}
   \begin{equation}\label{3e}
   \int_{(\Omega(\e k_0))^2\cap D_{\e R}}\left| T_\e (u)(x)-T_\e (u)(y)     \right|^p\, dx\, dy\le c_2(r)\int_{(\Omega\cap\e E)^2\cap D_{\e r}}\left|  u(x)- u(y)     \right|^p\, dx\, dy,
   \end{equation}
where we use notation \eqref{retract}.
The positive constants $c_1$ and $c_2$ depend on $E$ and $d$ and, in addition,  $c_2$ depends also on $r$, but both are independent of $\e$. 
   \end{thm}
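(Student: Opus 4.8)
The plan is to follow the classical two-scale strategy for extension operators in perforated domains, adapting the local construction of \cite{ACDP} to the non-local setting. By scaling $x\mapsto x/\e$ it suffices to construct, once and for all, an extension operator from $E$ to $\R^d$ that behaves well with respect to the seminorm $\int_{(\cdot)^2\cap D_r}|u(x)-u(y)|^p\,dx\,dy$, with constants depending only on $E$, $d$ and $r$; the statement for $T_\e$ then follows by setting $T_\e u(x)=(T_1(u(\e\,\cdot)))(x/\e)$ and unravelling the change of variables, the factor $\e^d$ cancelling on both sides of \eqref{2e} and \eqref{3e}. So from now on I work with $\e=1$.

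The first, and main, step is a \emph{local} extension lemma: there exist an integer $N=N(E)$ and a constant $c=c(E,d)$ such that, for every $z\in\Z^d$, writing $Q_z^N=z+(-N,N)^d$, there is a linear map $P_z:L^p(Q_z^N\cap E)\to L^p(Q_z^N)$ with $P_zu=u$ on $Q_z^N\cap E$ and
\begin{equation*}
\int_{(Q_z^N)^2}|P_zu(x)-P_zu(y)|^p\,dx\,dy\le c\int_{(Q_z^N\cap E)^2}|u(x)-u(y)|^p\,dx\,dy,
\end{equation*}
together with the corresponding $L^p$ bound. By $Q$-periodicity of $E$ it is enough to do this for finitely many congruence classes, hence for a single cube, so compactness of $\partial E\cap \overline{Q_0^N}$ and the Lipschitz hypothesis reduce the construction to a finite number of boundary charts. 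On each chart $E$ is a Lipschitz subgraph, and there one uses a reflection/averaging extension across the graph; the delicate point, already noted in the introduction, is that $E\cap Q_z^N$ need be neither connected nor regular, which is precisely why one must take $N$ large enough (depending only on the Lipschitz constant of $\partial E$) that $E\cap Q_z^N$ contains a connected component joining all the charts — here connectedness of the global set $E$ is used. The non-locality forces one further subtlety: the Poincaré-type inequality controlling $\|P_zu-u_{Q_z^N\cap E}\|_{L^p(Q_z^N)}$ must be derived from the \emph{non-local} seminorm on $E\cap Q_z^N$, which requires a non-local Poincaré inequality on the connected Lipschitz set $E\cap Q_z^N$ of the form $\int_{A}|v-v_A|^p\le C\int_{A^2}|v(x)-v(y)|^p$; such an inequality holds on bounded connected Lipschitz sets and its constant is scale-covariant, so again only finitely many constants enter. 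This step is where all the real work lies, and I expect it to be the main obstacle, both because of the geometric bookkeeping with the charts and because one must track that $\mathrm{dist}(x,y)\le r$ on the image side may correspond to points landing in different cubes, which is the origin of the enlarged radius $R$ in \eqref{3e}.

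The second step glues the local pieces by a periodic partition of unity. Let $\{\varphi_z\}_{z\in\Z^d}$, $\varphi_z(x)=\varphi_0(x-z)$, be a smooth partition of unity subordinate to $\{Q_z^1\}$ (or a slightly larger cube), with $\sum_z\varphi_z\equiv1$ and bounded gradients, and set
\begin{equation*}
Tu(x)=\sum_{z\in\Z^d}\varphi_z(x)\,P_zu(x).
\end{equation*}
For $x\in\Omega(k_0)$ with $k_0\sim N\sqrt d$ only finitely many (a number bounded by a dimensional constant) terms are nonzero near $x$, and on $\Omega\cap E$ every $P_zu=u$, so $Tu=u$ there, giving \eqref{1e}. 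For \eqref{3e} one estimates, for $|x-y|\le R$ with $R=2N\sqrt d$,
\begin{equation*}
Tu(x)-Tu(y)=\sum_z\big(\varphi_z(x)-\varphi_z(y)\big)P_zu(x)+\sum_z\varphi_z(y)\big(P_zu(x)-P_zu(y)\big),
\end{equation*}
and bounds the first sum using $|\varphi_z(x)-\varphi_z(y)|\le \|\nabla\varphi_z\|_\infty|x-y|$ together with $\sum_z(\varphi_z(x)-\varphi_z(y))=0$, which lets one replace $P_zu(x)$ by $P_zu(x)-m$ for any common constant $m$ (take $m$ the average of $u$ over a fixed component of $E$ in the relevant bounded cluster of cubes), and then invokes the non-local Poincaré inequality from Step 1; the second sum is controlled termwise by the local estimate. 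Summing over the cubes $z$ meeting a fixed ball and using finite overlap yields \eqref{3e} with $c_2$ depending on $r$ through the radius needed in the local Poincaré inequality (one must chain nearest-neighbour interactions of range $r$ to connect a cube, which inflates the constant but not $R$). Estimate \eqref{2e} is obtained the same way from the local $L^p$ bounds and finite overlap. Linearity and continuity of $T_\e$ are immediate from the construction. Finally one records that, by \eqref{c2}, replacing the kernel $a$ by $c\,\chi_{B_{r_0}}$ only decreases the right-hand energy, so the extension estimate in the form \eqref{3e} (with $r=r_0$) yields the announced inequality \eqref{imminent} for the original functionals.
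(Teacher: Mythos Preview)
Your outline is correct and follows essentially the same route as the paper: a local reflection-plus-mean extension on cubes (Lemma~\ref{lemma2.6}, resting on the connectedness Lemma~\ref{LemmaACMP} from \cite{ACDP}), a discrete-path chaining lemma converting the unrestricted double integral on a cube to the $D_r$-restricted one on a larger cube (Lemma~\ref{lemma:crucialest}), a periodic partition-of-unity gluing exploiting $\sum_\alpha(\psi^\alpha(x)-\psi^\alpha(y))=0$ (Lemma~\ref{lemma2.7}), and the final rescaling by $\e$. One point to tighten: your local estimate is stated with the \emph{same} cube $Q_z^N$ as both data domain and target, but the reflection construction requires the target strictly inside the data cube---the paper takes $\Phi:L^p(C)\to L^p(2Q)$ with $C$ a component of $kQ\cap E$, $k\ge4$, and the Example following Lemma~\ref{lemma2.6} shows the seminorm bound can fail when $\omega'=\omega$---so $P_z$ should draw data from a strictly larger cube than its target, after which your scheme matches the paper's exactly.
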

The proof, which will be given in the next subsection, is quite technical and it is split into several lemmas.

\subsection{Technical lemmas and proof of the main result}
In order to give an idea of the construction of the extension operator, we assume that $E\cap 2Q$ is connected and has Lipschitz boundary. Under these assumptions, there exists a linear and continuous operator $\Phi: L^p(E\cap 2Q)\to L^p(2Q)$ satisfying, in particular, an estimate analogous to \eqref{3e} (see Lemma \ref{lemma2.6}). Then, we consider the family $\Phi^\alpha$ of the extension operator obtained by traslating $\Phi$ by an integer vector $\alpha\in\mathbb{Z}^d$. Finally, thanks to a periodic partition of unity, the construction of a global extension operator  is achieved glueing together $\Phi^\alpha$ (see Lemma \ref{lemma2.7}). Now, the assumptions that $E\cap 2Q$ is connected and has Lipschitz boundary in general are not satisfied (unless the complement of $E$ is a disjoint union of compact sets, which is the case studied in \cite{BP_per}), so that  the first step consists to overcome the lack of connectedness of $E\cap 2Q$ and the regularity of its boundary. To this end, we state a slightly modified version of  \cite[Lemma 2.3]{ACDP}, which is a key tool for the construction of the extension operator.  The proof remains analogous to that of \cite[Lemma 2.3]{ACDP} and is not repeated here. 
   \begin{lemma}
   \label{LemmaACMP}
   Let $E$ be a connected open subset of $\rd$ with Lipschitz boundary. Then, there exists $k\in\mathbb{N}$, $k\geq 4$, such that $3Q\cap E$ is contained in a single connected component $C$ of $kQ\cap E$. Moreover, $C$ has Lipschitz boundary at each point of $\partial C\cap 3\overline{Q}$.
   \end{lemma}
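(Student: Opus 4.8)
\medskip
\noindent\textbf{Proof strategy.} The plan is to argue in three stages: first to show that $3Q\cap E$ has only finitely many connected components, then to join all of them inside a sufficiently large cube using the connectedness of $E$, and finally to check that the component thus obtained inherits a Lipschitz boundary near $3\overline{Q}$.

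For the first stage I would combine the compactness of $\partial E\cap 3\overline{Q}$ with the local graph structure. Covering $\partial E\cap 3\overline{Q}$ by finitely many of the coordinate rectangles $U_{x_1},\dots,U_{x_m}$ from the definition of Lipschitz boundary, each $E\cap U_{x_j}$ is by that definition a nonempty connected open set. The compact set $K:=3\overline{Q}\setminus\bigcup_{j}U_{x_j}$ is disjoint from $\partial E$, hence at positive distance from it, and can therefore be covered by finitely many open balls, each of which, being connected and disjoint from $\partial E$, lies either in $E$ or in $\rd\setminus\overline{E}$. Thus $3Q\cap E$ is covered by the finite family made of the sets $E\cap U_{x_j}$ together with those balls that are contained in $E$, all of them connected; since each connected subset of $3Q\cap E$ lies in a single connected component of $3Q\cap E$, only finitely many components occur, say $C_1,\dots,C_N$.

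In the second stage, fix $p_i\in C_i$ for $i=1,\dots,N$. As $E$ is open and connected it is path-connected, so for each $i$ there is a path in $E$ joining $p_1$ to $p_i$, and the union of the (compact) images of these paths is bounded; I would then pick $k\in\N$ with $k\ge 4$ and $kQ$ containing $3\overline{Q}$ together with all these images. Each $C_i$ is a connected subset of $kQ\cap E$, hence lies in one component of $kQ\cap E$, and the path from $p_1$ to $p_i$ lies in $kQ\cap E$, so all the $C_i$ lie in a common component $C$; consequently $3Q\cap E=\bigcup_i C_i\subseteq C$. For the third stage, take $x\in\partial C\cap 3\overline{Q}$. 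Since $k\ge 4$ the set $3\overline{Q}$ lies in the interior of $kQ$, so there is $\rho>0$ with $B(x,\rho)\subseteq kQ$ and hence $B(x,\rho)\cap(kQ\cap E)=B(x,\rho)\cap E$. The point $x$ cannot belong to $E$, for $C$ is open (a connected component of an open subset of $\rd$) and a connected neighbourhood of $x$ contained in $kQ\cap E$ would meet $C$, hence be contained in $C$, against $x\in\partial C$; and $x\in\overline{C}\subseteq\overline{E}$ excludes $x\notin\overline{E}$; so $x\in\partial E$. Taking the coordinate rectangle for $E$ at $x$, shrunk concentrically inside $B(x,\rho)$, its slice $E\cap U$ is connected, lies in $kQ\cap E$ and meets $C$, whence $E\cap U\subseteq C$, so $C\cap U=E\cap U$ and $\overline{C}\cap U=\overline{E}\cap U$; the local graph representation witnessing the Lipschitz boundary of $E$ at $x$ then also witnesses that of $C$.

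I expect the first stage — finiteness of the number of components of $3Q\cap E$ — to be the main obstacle: it is exactly what makes a single, $\e$-independent $k$ available, and it is the only point where the (uniform) Lipschitz regularity of $\partial E$ has to be fed into a compactness argument. The remaining stages are routine once one notes that near $3\overline{Q}$ the truncated set $kQ\cap E$ coincides with $E$.
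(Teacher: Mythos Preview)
The paper does not supply a proof of this lemma; it simply refers to \cite[Lemma~2.3]{ACDP} and states that the argument is analogous. So there is nothing to compare against, and the question is whether your outline stands on its own. Stages~2 and~3 are sound (modulo the paper's convention $Q=(0,1)^d$, under which $3\overline{Q}$ shares the faces through the origin with $kQ$ and is not literally contained in its interior; this is a notational wrinkle of the paper rather than a defect in your reasoning). Stage~1, however, has a real gap.

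Your covering of $3Q\cap E$ by the finitely many connected open sets $E\cap U_{x_j}$ and the balls $B_l\subset E$ is correct, but these sets are connected subsets of $E$, \emph{not} of $3Q\cap E$: the rectangles $U_{x_j}$ are centred on points of $\partial E\cap 3\overline{Q}$, and the balls are centred on points of $K\subset 3\overline{Q}$, so both families generically protrude outside $3Q$. The clause ``each connected subset of $3Q\cap E$ lies in a single connected component of $3Q\cap E$'' therefore does not apply to them, and the conclusion ``only finitely many components occur'' is unjustified as written. (Covering a set $A$ by finitely many connected sets not contained in $A$ says nothing about the number of components of $A$: take $A$ a countable disjoint union of balls and cover it by the single set $\R^d$.)

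The repair is easy and in fact lets you dispense with Stage~1 entirely. Your covering sets $W_1,\dots,W_M$ (the $E\cap U_{x_j}$ together with the balls lying in $E$) are \emph{bounded} connected subsets of $E$. Choose $q_k\in W_k$, join $q_1$ to each $q_k$ by a path in $E$ exactly as in your Stage~2, and take $k\in\N$, $k\ge4$, so large that $kQ$ contains all the $W_k$ and all the path images. Then every $W_k$ is a connected subset of $kQ\cap E$, hence contained in a single component of $kQ\cap E$; the paths force all these components to coincide, say equal to $C$; and $3Q\cap E\subseteq\bigcup_k W_k\subseteq C$. Stage~3 then goes through verbatim.
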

We denote henceforth by $\tilde{C}$ the positive constant given by  $\tilde{C}:= 2\sqrt{d}k$, where $k$ is defined as  Lemma \ref{LemmaACMP}.

The next lemma is an easy consequence of the H\"{o}lder inequality. 
\begin{lemma}\label{LSE}
Let $A$ be an open subset of $\R^d$. Assume that $A$ has finite and positive Lebesgue measure $|A|<\infty$. Then, for every $u\in L^p(A)$, with $1<p<\infty$,
    \begin{equation}
                \label{SE}
                \int_{A} |u_A-u(x)|^pdx\leq \frac{1}{|A|}\int_{A\times A} |u(x)-u(y)|^pdxdy.
                \end{equation}

\end{lemma}

\proof Denote by $p'$ the conjugate exponent of $p$. Thanks to H{\"o}lder's inequality, we deduce
     \begin{align*}
     \int_{A} |u_A-u(x)|^pdx&= \frac{1}{|A|^p}\int_{A}\left|\int_{A} (u(y)-u(x))dy\right|^pdx\\
     &\leq \frac{|A|^{p/p'}}{|A|^p}\int_{A}\int_{A} |u(y)-u(x)|^pdydx\\
     &= \frac{1}{|A|} \int_{A\times A}|u(y)-u(x)|^pdxdy,
     \end{align*}
which concludes the proof.
\qed

The next lemma shows the existence of an extension operator $\Phi$ on general sets of $\rd$. It is an adaptation of \cite[Lemma 2.6]{ACDP}.   

\begin{lemma}\label{lemma2.6}
Let $B$, $\omega$, $\omega'$ be bounded open subsets of $\R^d$. Assume that $\partial B$ is Lipschitz-continuous at each point of $\partial B\cap \overline\omega$ and $\omega'\subset\subset\omega$. Then, there exist a positive real number $R>0$ and a linear and continuous extension operator $\Phi:L^p(B)\to L^p(\omega')$ such that,
for all $u\in L^p(B)$,
\begin{equation}\label{1}
\Phi (u)=u \qquad\mbox{a.e.~in }\hspace{0.1cm}B\cap\omega',
\end{equation}
\begin{equation}\label{2}
\int_{\omega'} |\Phi (u)|^p\, dx\le c_1\int_{B\cap\omega}|u|^p\, dx,
\end{equation}
\begin{equation}\label{3}
\int_{(\omega'\times\omega')\cap D_R}\left| \Phi (u)(x)-\Phi (u)(y)     \right|^p\, dx\, dy\le c_2\int_{(B\cap\omega)^2}\left|  u(x)- u(y)     \right|^p\, dx\, dy,
\end{equation} 
where $c_1$ and $c_2$ are positive constant depending only on $B, \omega', \omega$ and $p$.
\end{lemma}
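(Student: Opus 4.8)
The plan is to reduce the statement to a classical Sobolev-type extension estimate near a Lipschitz boundary, combined with the Poincaré-type inequality of Lemma \ref{LSE}. First I would fix an intermediate open set $\omega''$ with $\omega'\subset\subset\omega''\subset\subset\omega$ and set $R$ to be a fraction (say $\tfrac13$) of $\dist(\omega',\partial\omega'')$, small enough that, whenever $x\in\omega'$ and $|x-y|\le R$, the segment — and in fact a whole ball around it — stays inside $\omega''$. The idea is that $\Phi(u)$ should literally \emph{be} $u$ on $B\cap\omega''$ and should be a bounded extension of $u|_{B\cap\omega''}$ to all of $\omega''$; outside $\omega''$ we do not care (we only need to be defined on $\omega'$). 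Since $\partial B$ is Lipschitz at every point of $\partial B\cap\overline\omega$, and $\overline{\omega''}$ is a compact subset of $\omega$, we may cover $\partial B\cap\overline{\omega''}$ by finitely many of the rectangles $U_x$ from the definition of Lipschitz boundary, together with an open set staying away from $\partial B$; on each such rectangle there is a standard reflection/extension operator for $L^p$ (indeed for $W^{1,p}$, but here we only need the $L^p$ extension that is simultaneously a difference-quotient extension), and one glues these with a partition of unity subordinate to the cover. This produces a linear continuous $\Phi:L^p(B\cap\omega)\to L^p(\omega')$ with $\Phi(u)=u$ on $B\cap\omega'$ and with \eqref{2}.

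The heart of the matter is \eqref{3}. Write $v=\Phi(u)$. For $(x,y)\in(\omega'\times\omega')\cap D_R$ I would split
\[
|v(x)-v(y)|^p\le 2^{p-1}\bigl(|v(x)-u_{B\cap\omega}|^p+|v(y)-u_{B\cap\omega}|^p\bigr),
\]
so that it suffices to bound $\int_{\omega'}|v(x)-u_{B\cap\omega}|^p\,dx$ by the right-hand side of \eqref{3}, because integrating in the remaining variable over $\{y:|x-y|\le R\}$ only multiplies by the fixed constant $|B_R|$. Now $\int_{\omega'}|v-u_{B\cap\omega}|^p$ is controlled, via the continuity estimate \eqref{2} applied to $u-u_{B\cap\omega}$ (note $\Phi$ is linear and $\Phi$ of a constant is that constant on $B\cap\omega'$, but more carefully one applies the construction to $u-u_{B\cap\omega}$ directly), by $c\int_{B\cap\omega}|u-u_{B\cap\omega}|^p\,dx$, and this last integral is exactly what Lemma \ref{LSE} bounds by $\frac{1}{|B\cap\omega|}\int_{(B\cap\omega)^2}|u(x)-u(y)|^p\,dx\,dy$. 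Chaining these inequalities gives \eqref{3} with $c_2$ depending only on $B,\omega',\omega,p$.

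The main obstacle is making the extension operator on each boundary chart respect \emph{differences} and not merely $L^p$ norms — i.e. ensuring that the local reflection map $\Phi_j$ satisfies an estimate of the form $\int|\Phi_j u(x)-(\text{mean})|^p\le c\int_{B\cap\omega}|u-(\text{mean})|^p$ uniformly, and that gluing with a partition of unity does not destroy it. This is where the Lipschitz constant enters quantitatively, and it is the step for which one invokes the construction of \cite[Lemma 2.6]{ACDP}: the reflection across a Lipschitz graph is a bi-Lipschitz change of variables, so it transforms $L^p$ integrals with bounded Jacobian, and the partition of unity functions are themselves bounded, so the glued operator inherits the bound with a constant depending on the (finite) number of charts, the Lipschitz constant, and $\|\nabla\phi_j\|_\infty$ — all of which depend only on $B,\omega',\omega$. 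Once the local estimate is in the mean-oscillation form above, the passage to \eqref{3} is the routine computation sketched in the previous paragraph. I would also remark that the freedom in choosing $R$ is what decouples the geometry of the extension (governed by $\omega''$) from the interaction range, so that $R$ can be taken to depend only on $\operatorname{dist}(\omega',\partial\omega)$ and not on $u$.
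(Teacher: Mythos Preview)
Your proposal is correct, and your route to \eqref{3} is genuinely simpler than the paper's. The paper builds the explicit operator
\[
\Phi(u)(x)=
\begin{cases}
u(x), & x\in B\cap\omega',\\
\varphi(x)\,u(\calR^{-1}(x))+(1-\varphi(x))\,u_{B\cap\omega}, & x\in A_t\cap\omega',\\
u_{B\cap\omega}, & x\in \omega'\setminus(B\cup A_t),
\end{cases}
\]
with $\calR$ a bi-Lipschitz reflection across $\partial B$ and $A_t$ a collar of width $t$; it then takes $R<t$ and splits $(\omega'\times\omega')\cap D_R$ into seven regions according to which of the three pieces $x$ and $y$ fall into, estimating each region by hand via the change of variables $x'=\calR^{-1}(x)$ and Lemma~\ref{LSE}. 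Your argument bypasses all of this: once one has \emph{any} linear $\Phi$ satisfying \eqref{2} and $\Phi(c)=c$ on $\omega'$ (which the paper's $\Phi$ visibly does), the bound $|v(x)-v(y)|^p\le 2^{p-1}\bigl(|v(x)-m|^p+|v(y)-m|^p\bigr)$ with $m=u_{B\cap\omega}$ reduces \eqref{3} to \eqref{2} applied to $u-m$, and then to Lemma~\ref{LSE}. In fact your computation gives the stronger inequality with $(\omega'\times\omega')$ in place of $(\omega'\times\omega')\cap D_R$ on the left, so no smallness of $R$ is needed at all; your third paragraph's concern about making the local charts ``respect differences'' is therefore unnecessary, since your own second-paragraph argument already closes \eqref{3} without it.

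Two small remarks. First, your sketch of the construction does not say what $\Phi$ does on the part of $\omega'$ lying in the complement of $B$ and away from $\partial B$; the natural choice (and the paper's) is to set $\Phi(u)=u_{B\cap\omega}$ there, which is exactly what makes $\Phi$ preserve constants globally on $\omega'$ and makes \eqref{2} hold. Second, the paper's explicit three-piece formula is not wasted effort: it is reused verbatim in the proof of Lemma~\ref{lemma2.7}, where the translated operators $\Phi^\alpha$ are glued by a partition of unity and a further case analysis (the sets $S_i^{\alpha,\beta}$) is carried out using the explicit expression of $u^\alpha$. Your abstract argument would still feed into that lemma, but one would have to rework those cross-terms accordingly.
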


\proof
Since $\partial B$ has Lipschitz boundary at each point of $\partial B\cap\overline{\omega}$, there exist a neighbourhood $U$ of $\partial B\cap\overline{\omega}$ and a bi-lipschitz map ${\calR}:U\cap B\to U\setminus B$ such that, for any $x_1,x_2\in U\cap B$,
$$
\frac12|\calR(x_1)-\calR(x_2)|\le |x_1-x_2|\le 2
|\calR(x_1)-\calR(x_2)|.
$$
For fixed $t>0$ chosen below, we consider the set
\begin{equation}\label{At}
A_t:=\{ x\in \omega\setminus B:{\rm dist}(x,\partial B)<t      \}.
\end{equation}
We may fix $t>0$ small enough  such that 
      \begin{equation}
      \label{propA}
      A_t\cap\omega'\subset U\setminus B \qquad \text{and}\qquad \calR^{-1}( A_t\cap\omega') \subset B\cap\omega.
      \end{equation} 
Let $\varphi$ be a $C^\infty$ function such that $0\le \varphi\le 1$, $\varphi\equiv 1$ in $\overline B$ and $\varphi\equiv 0$ in 
$\{x\in \R^d\setminus B:{\rm dist}(x,\partial B)\ge t\}$.
We define the operator $\Phi\hspace{0.03cm}:\hspace{0.03cm} L^p(B)\to L^p(\omega')$ as follows
\begin{equation}
\label{defop}
\Phi (u)(x):=
\begin{dcases}
  u(x), & x\in B\cap\omega', \\
  \varphi(x) u(\calR^{-1}(x)) + (1-\varphi(x)) u_{B\cap\omega}, & x \in A_t\cap\omega',\\
  u_{B\cap\omega}, & x\in \omega'\setminus A_t,
\end{dcases}
\end{equation}
where $u_{B\cap\omega}$ denotes the mean value of the function $u$ over $B\cap\omega$ (see \eqref{mean}). It follows that $\Phi(u)\in L^p(\omega')$ and $\Phi(u)=u$ a.e.~in $B\cap\omega'$;  \textit{i.e.}, condition \eqref{1} is satisfied.

We now show condition \eqref{2}. To this end, note that $\omega'$ can be written as
     \begin{equation*}
     \label{propR}
     \omega'= (B\cap\omega')\cup (A_t\cap\omega')\cup(\omega'\setminus A_t).
     \end{equation*}
This, combined with the Jensen inequality and the definition \eqref{defop} of $\Phi$, yields
    \begin{align}
    \int_{\omega'} |\Phi (u)(x)|^pdx&=\int_{B\cap\omega'} |\Phi (u)(x)|^pdx + \int_{A_t\cap\omega'} |\Phi (u)(x)|^pdx + \int_{\omega'\setminus A_t} |\Phi (u)(x)|^pdx\notag\\
    &=\int_{B\cap\omega'} |u(x)|^pdx + \int_{A_t\cap\omega'} |\varphi(x) u(\calR^{-1}(x)) + (1-\varphi(x)) u_{B\cap\omega}|^pd\xi  \notag \\
    &\quad+ |\omega'\setminus A_t||u_{B\cap\omega}|^p\notag\\
    &\leq \int_{B\cap\omega'} |u(x)|^pdx + 2^{p-1}\int_{A_t\cap\omega'} | u(\calR^{-1}(x))|^pdx\notag\\
    &\quad + |u_{B\cap\omega}|^p (2^{p-1}|\omega'\cap A_t|+|\omega'\setminus A_t|).\label{es1}
    \end{align}
Since $\calR$ is a bi-Lipschitz map, the Jacobian $\left|\tfrac{\partial \calR}{\partial x}(x)\right|$ is a bounded function;  \textit{i.e.}, there exists a positive constant $c_\calR$ such that
     \begin{equation}
     \label{Jacobian}
     \left|\frac{\partial \calR}{\partial x}(x) \right|\leq c_\calR,
     \end{equation}
so that, thanks to the change of variables $x'=\calR^{-1}(x)$ and properties \eqref{propA}, we have
        \begin{align*}
        \int_{A_t\cap\omega'} |u(\calR^{-1}(x))|^pdx \leq c_\calR \int_{B\cap\omega} |u(x')|^pdx'.
        \end{align*}
This, along with \eqref{es1}, implies that 
     \begin{align*}
     \int_{\omega'} |\Phi (u)(x)|^pdx &\leq (c_{\calR}2^{p-1}+1)\int_{B\cap\omega'} |u(x)|^pdx + |u_{B\cap\omega}|^p (2^{p-1}|\omega'\cap A_t|+|\omega'\setminus A_t|)\\
    &\leq c_1 \int_{B\cap\omega} |u(x)|^pdx, 
     \end{align*}   
where $c_1$ denotes a positive constant depending only on $p, \omega', B$ and $\calR$. Hence, condition \eqref{2} is proven. 

To conclude the proof, it remains to check condition \eqref{3}. Fix  $R<t$. For   $(x, y)\in(\omega'\times\omega')\cap D_R$, it is enough to estimate the integral in the left-hand side of \eqref{3} by examining separately the sets
    \begin{align*}
    S_1 &= ((B\cap \omega')\times(B\cap \omega'))\cap \DR,\\
    S_2 &= ((B\cap \omega') \times (A_t\cap\omega') )\cap \DR,\\
    S'_2 &= (  (A_t\cap\omega')\times (B\cap \omega')\cap\DR,\\
    S_3 &=  ((A_t\cap\omega') \times (A_t\cap\omega'))\cap \DR,\\
    S_4 &=  ((A_t\cap\omega')\times (\omega'\setminus A_t))\cap \DR,\\
    S'_4 &= ( (\omega'\setminus A_t)\times(A_t\cap\omega'))\cap \DR,\\
    S_ 5 &= ((\omega'\setminus A_t)\times(\omega'\setminus A_t)   )\cap \DR.
    \end{align*}
Note that the other cases do not occur since the distance between the points is grater than $R$. Indeed, take, for example,  $(x,y)\in (B\cap\omega')\times(A_t\setminus \omega')$. Due to definition of $A_t$ and since $R<t$, the distance $|x-y|$ is greater than $R$.

\noindent Now, we evaluate the left-hand side of \eqref{3} on the set $S_i$ defined above. In view of the definition \eqref{defop} of $\Phi$, we have
   \begin{align}
    \int_{S_1}\left|\Phi (u)(x)-\Phi (u)(y) \right|^p\, dx dy & = \int_{S_1}\left|u(x)-u(y) \right|^p\, dx dy  \notag\\
    &\leq\int_{(B\cap\omega)^2}\left|u(x)-u(x) \right|^p\, dx dy. \notag
    \end {align}
Here, we used the fact that $S_1\subset (B\cap \omega')^2\subset (B\cap\omega)^2$.\\
Due to definition  \eqref{defop} of $\Phi$, an application of Jensen's inequality yields 
     \begin{align}
     \int_{S_2}|\Phi(u)(x)-\Phi(u)(y)|^pdx dy &=\int_{S_2} |u(x)-u(\calR^{-1}(y)) + (1-\varphi(y))(u(\calR^{-1}(y)) - u_{B\cap\omega})|^pdxdy\notag\\
      & \le 2^{p-1} \int_{S_2} |u(x)-u(\calR^{-1}(y))|^p \, dx dy   \nonumber \\
       &\quad+  2^{p-1} \int_{S_2} |1-\varphi(y)|^p \left|
       u(\calR^{-1}(y))-u_{B\cap\omega}   \right|^p\, dx dy   \label{i1}
        \end {align}
Using the  change of variables $y'=\calR^{-1}(y)$ and properties \eqref{propA} and \eqref{Jacobian}, the first integral in the left-hand side of \eqref{i1} can be estimates as 
\begin{align}
   \int_{S_2} |u(x)-u(\calR^{-1}(y))|^p \, dx dy & \leq \int_{B\cap\omega'}\left( \int_{A_t\cap\omega'}|u(x)-u(\calR^{-1}(y))|^pdy \right) dx\notag\\
   &\le  c_\calR  \int_{(B\cap\omega')^2} \left|u(x)-u(y')\right|^p dx dy' 
    \nonumber \\  &  \le c_\calR  \int_{(B\cap\omega)^2}
   \left|u(x)-u(y)\right|^p
  \,  dx dy.\label{i2}
  \end {align}
By applying Lemma \ref{LSE} and taking into account condition \eqref{Jacobian}, the second integral in the right-hand side of \eqref{i1} can be estimated as
 \begin{align}
 \int_{S_2} |1-\varphi(y)|^p \left|
   u(\calR^{-1}(y))-u_{B\cap\omega}   \right|^p\, dx dy
    & \le |B\cap\omega'|\int_{A_t\cap\omega'}  \left|
       u(\calR^{-1}(y))-u_{B\cap\omega}   \right|^p\, dy\notag\\ 
       &\leq c_\calR |B\cap\omega'| \int_{B\cap\omega} \left|
    u(y')-u_{B\cap\omega}
    \right|^p\,dy' \notag\\
   &\le c_\calR\frac{|B\cap\omega'|}{|B\cap\omega|}\int_{(B\cap\omega)^2}  \left|
      u (x)-u(y)   \right|^p\, dx dy.\notag
 \end{align}
Combined with \eqref{i1} and \eqref{i2}, this  implies 
         \begin{equation*}
         \int_{S_2} |\Phi u(x) - \Phi u(y)|^pdx dy\leq c\int_{(B\cap\omega)^2} | u(x) -  u(y)|^pdx dy,
         \end{equation*}
where $c$ is a positive constant depending on $p,B, \omega, \omega'$ and $\calR$.
Similarly, we have that 
       \begin{equation*}
       \int_{S'_2} |\Phi u(x) - \Phi u(y)|^pdx dy\leq c\int_{(B\cap\omega)^2} | u(x) -  u(y)|^pdx dy.
       \end{equation*}
Now, consider $(x, y)\in S_3$. From the definition \eqref{defop} of $\Phi$, we have
\begin{align}
\label{PhiS3}
\Phi u(x)-\Phi u(y) = F_1(x,y) + F_2(x,y),   
 \end{align}
where $F_1(x,y)$ and $F_2(x,y)$ are given by 
     \begin{align*}
     F_1(x, x)&:=( u(\calR^{-1}(x))-u_{B\cap\omega})( \varphi(x)- \varphi(y)),\\
     F_2(x, y)&:=\varphi(y) \left(  u(\calR^{-1}(x)) - u(\calR^{-1}(y))    \right).
     \end{align*}
Thanks to Lemma \ref{LSE} and due to properties \eqref{propA} and the estimate $|\varphi(x) - \varphi(y)|\leq 2$, we deduce that 
  \begin{align}
\int_{S_3} |F_1(x, y)|^p \, dx dy &\le 2^p \int_{(A_t\cap\omega')^2}\left|
u(\calR^{-1}(x))-u_{B\cap\omega}
\right|^p\, dxdy   \nonumber \\
&=2^p|A_t\cap\omega'|\int_{(A_t\cap\omega')}\left|
 u(\calR^{-1}(x))-u_{B\cap\omega}
\right|^p\, dx   \nonumber \\
&\le 2^p|A_t\cap\omega'| c_\calR  \int_{B\cap\omega}
 \left|
u(x')-u_{B\cap\omega} 
\right|^p \, dx'  \nonumber\\
& \le  2^p c_\calR\frac{|A_t\cap\omega'|}{|B\cap\omega|}  \int_{(B\cap\omega)^2} |u(x')-u(y)|^p\, dx' dy.  \label{f2}
 \end{align}
On the other hand, using the changes of variables $x'=\calR^{-1}(x)$ and $y'=\calR^{-1}(y)$, we get 
    \begin{align}
 \int_{S_3} |F_2(x, y)|^p \, dx dy  &\le  \int_{(A_t\cap\omega')^2}
 \left| u(\calR^{-1}(x)) - u(\calR^{-1}(y))    \right|^p \, dx dy  
 \nonumber\\
 & \le c_\calR^2\int_{(B\cap\omega)^2}\left| u(x')-u(y')\right|^p\, dx' dy'.    \label{f1}
 \end{align}
In view of \eqref{PhiS3}, an application of Jensen's inequality combined with \eqref{f2}
and \eqref{f1} leads to
 \begin{align}
\int_{S_3}  \left|
   \Phi (u)(x)-\Phi (u)(y)   \right|^p\, dx dy &\le 2^{p-1} \left(\int_{S_3} |F_1(x, y)|^p \, dx dy +   \int_{S_3} |F_2(x, y)|^p \, dx dy\right)\notag\\
   &\leq c\int_{(B\cap\omega )^2}\left| u(x)-u(y)\right|^p\, dx dy,\label{f3}
 \end{align}
where $c$ denotes a positive constant depending only on $p, B, \omega, \omega'$ and $\calR$.\\
Take now $(x,y)\in S_4$. Applying Lemma \ref{LSE} and using the change of variables $x'=\calR^{-1}(x)$, from the definition \eqref{defop} of $\Phi$, we deduce that
 \begin{align}
\int_{S_4}
\left| \Phi (u)(x)-\Phi (u)(y)
\right|^p\, dx dy       
& =|\omega'\setminus A_t|  \int_{A_t\cap\omega'}  \left| u(\calR^{-1}(x)) -u_{B\cap\omega}\right|^p \, dx  \nonumber\\
& \le c_\calR |\omega'\setminus A_t| \int_{B\cap\omega} \left| u(x')-u_{B\cap\omega} \right|^p\, dx' \nonumber\\
&\le c_\calR  \frac{|\omega'\setminus A_t|}{|B\cap\omega|}\int_{(B\cap\omega)^2}\left| u(x)-u(y)\right|^p\, dx dy\nonumber
 \end{align}
Similarly, we also get 
    \begin{equation*}
    \int_{S'_4}
    \left| \Phi (u)(x)-\Phi (u)(y)
    \right|^p\, dx dy  \leq  c_{\calR}\frac{|\omega'\cap A_t|}{|B\cap\omega|}\int_{(B\cap\omega)^2}\left| u(x)-u(y)\right|^p\, dx dy.
    \end{equation*}
Now, take $(x,y)\in S_ 5$. Hence, we have that $\Phi(x)-\Phi(y) = 0$ for a.e.~$x,y\in \omega'\setminus A_t$. Finally, gathering all the previous estimates, we conclude that 
    \begin{align*}
    \int_{(\omega'\times\omega')\cap D_R} |\Phi(u)(x) - \Phi(u)(y)|^pdxdy &= \sum_{i=1}^{5}\int_{S_i} |\Phi(u)(x) - \Phi(u)(y)|^pdxdy \\
    &\quad +\int_{S'_2\cup S'_4} |\Phi(u)(x) - \Phi(u)(y)|^pdxdy\\
    &\leq c_2 \int_{(B\cap\omega)^2}|u(x)-u(y)|^pdxdy,
    \end{align*}
where $c_2$ is a costant depending on $p, \omega', \omega$ and $B$. This shows \eqref{3} and concludes the proof.
\qed

The reflection argument that we used to construct the operator $\Phi$ cannot be used to prove the existence of a map $\Phi: L^p(B) \to L^p(\omega)$ since estimate \eqref{3} may not hold with $\omega'=\omega$, as showed in the following example.
 \begin{example} 
 Let $B$ be the ball in $\R^2$ centered at $0$  and of radius $1$ and let $\omega$ be the set of $\R^2$ defined by 
          \begin{equation*}
          \omega:= \{(x,y)\in\R^2\hspace{0.03cm}:\hspace{0.03cm} x\in(-1, 2), -x+1\leq y\leq -x+2 \}.
          \end{equation*}
   We define $u\in L^p(B)$ as
       \begin{equation*}
       u(x):=
       \begin{cases}
       1,&\quad x\in B\setminus \omega,\\
       0,&\quad x\in B\cap \omega.
       \end{cases}
       \end{equation*}
    If $\Phi(u)$ is the extension of $u$ out of $B$ by reflection, then we have  
      \begin{align*}
      \int_{\omega^2\cap D_R}|\Phi u(x) - \Phi u (y)|^pdx dy>0,
      \end{align*}
   since $u$ is not identically constant in the neighbourhood of the points $(1,0)$ and $(0,1)$,
   while
         \begin{align*}
              \int_{(B\cap\omega)^2\cap D_R}| u(x) -  u (y)|^pdx dy=0,
              \end{align*}
  so that the condition \eqref{3} is not satisfied.
  \end{example}

      \begin{lemma}\label{lemma2.7}
      Let $E$ be a periodic, connected open subset of $\R^d$ with Lipschitz boundary. Let $\Omega, \omp$ be open subsets of $\R^d$ such that $\omp\subset\subset\Omega$ and $\dist(\omp, \partial\Omega)>\tilde{C}$. Then there exist $R=R(E)>0$ and a linear and continuous operator 
                  $$
              L\hspace{0.02cm}:\hspace{0.02cm} L^p(\Omega\cap E)\rightarrow L^p(\omp)
              $$
      such that for all $r>0$ and for all $u\in L^p(\Omega\cap E),$
           \begin{equation}
           \label{Iestimate}
           Lu=u,\qquad\mbox{a.e.~in} \hspace{0.3cm}\omp\cap E,
           \end{equation}
           \begin{equation}
           \label{IIestimate}
           \int_{\omp} |Lu|^pdx\leq c_1\int_{\Omega\cap E} |u|^pdx,           
           \end{equation}
           \begin{equation}
           \label{stimagrad}
           \int_{(\omp\times\omp)\cap D_R} |Lu(x)-Lu(y)|^pdx dy\leq c_2(r) \int_{(\Omega\cap E)^2\cap D_{r}}|u(x)-u(y)|^pdx dy,           
           \end{equation}
       where $c_1$ and $c_2$ are positive constants depending on $E$ and $d$ and, in addition, $c_2$ depends  also on $r$. The constant $R$ depends only on the set $E$.
      \end{lemma}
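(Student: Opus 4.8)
The plan is to build the global operator $L$ by localizing on the cubes of the lattice $\mathbb Z^d$ and gluing the local reflection-type extensions provided by Lemma \ref{lemma2.6} via a periodic partition of unity. First I would fix, for each $\alpha\in\mathbb Z^d$, the application of Lemma \ref{LemmaACMP} to obtain the integer $k\ge 4$ (depending only on $E$) such that $3Q\cap E$ lies in a single connected component $C$ of $kQ\cap E$ with Lipschitz boundary at every point of $\partial C\cap 3\overline Q$; translating by $\alpha$ gives a component $C^\alpha$ of $(kQ+\alpha)\cap E$ containing $(3Q+\alpha)\cap E$. Then I would apply Lemma \ref{lemma2.6} with $B=C^\alpha$, $\omega = \mathrm{int}(3\overline Q+\alpha)$ (on which $\partial B$ is Lipschitz), and $\omega' = \mathrm{int}(2\overline Q+\alpha)$, obtaining a radius $R=R(E)>0$ (independent of $\alpha$ by periodicity) and extension operators $\Phi^\alpha\colon L^p(C^\alpha)\to L^p(\omega')$ satisfying the analogues of \eqref{1}--\eqref{3}. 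Since $C^\alpha\subset E$, restriction gives $\Phi^\alpha$ defined on $L^p(\Omega\cap E)$ whenever $kQ+\alpha\subset\Omega$, which is guaranteed for the relevant $\alpha$ by the hypothesis $\dist(\omp,\partial\Omega)>\tilde C=2\sqrt d\,k$.

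Next I would choose a $Q$-periodic partition of unity: smooth functions $\{\theta^\alpha\}_{\alpha\in\mathbb Z^d}$ with $\theta^\alpha(x)=\theta^0(x-\alpha)$, $0\le\theta^0\le1$, $\mathrm{supp}\,\theta^0\subset 2Q$, $\sum_\alpha\theta^\alpha\equiv 1$, and uniformly bounded gradients, and define
\[
Lu(x) := \sum_{\alpha\in\mathbb Z^d}\theta^\alpha(x)\,\Phi^\alpha(u)(x)\qquad\text{for }x\in\omp.
\]
The sum is locally finite (at most $2^d$ nonzero terms near any point), so $L$ is well defined, linear and continuous. Property \eqref{Iestimate} follows because for $x\in\omp\cap E$ every relevant $\Phi^\alpha(u)(x)$ equals $u(x)$ by \eqref{1}, and $\sum\theta^\alpha=1$. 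For \eqref{IIestimate}, I would use that for $x$ in a given unit cell only boundedly many $\Phi^\alpha$ contribute, apply the convexity/Jensen bound $|\sum\theta^\alpha\Phi^\alpha u|^p\le C\sum\theta^\alpha|\Phi^\alpha u|^p$, use \eqref{2} on each cube with $B=C^\alpha$, $\omega\subset 3Q+\alpha$, and sum over $\alpha$; the finite overlap of the enlarged cubes $3Q+\alpha$ gives a uniformly bounded multiplicity, yielding $\int_{\omp}|Lu|^p\le c_1\int_{\Omega\cap E}|u|^p$.

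The main work is the nonlocal estimate \eqref{stimagrad}. For $(x,y)\in(\omp\times\omp)\cap D_R$ I would write, with $\{\alpha\}$ the indices active at $x$ and $y$,
\[
Lu(x)-Lu(y) = \sum_\alpha\theta^\alpha(x)\bigl(\Phi^\alpha u(x)-\Phi^\alpha u(y)\bigr) + \sum_\alpha\bigl(\theta^\alpha(x)-\theta^\alpha(y)\bigr)\Phi^\alpha u(y).
\]
The first sum is controlled termwise by $|\Phi^\alpha u(x)-\Phi^\alpha u(y)|^p$, which after integration over $(\omega'\times\omega')\cap D_R$ and summation in $\alpha$ is handled by \eqref{3}, producing $\sum_\alpha\int_{(C^\alpha\cap\omega)^2}|u(x)-u(y)|^p\le C\int_{(\Omega\cap E)^2}|u(x)-u(y)|^p$, again by finite overlap. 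For the second sum I would use $\sum_\alpha(\theta^\alpha(x)-\theta^\alpha(y))=0$ to insert any reference constant: replacing $\Phi^\alpha u(y)$ by $\Phi^\alpha u(y)-c$ for a common $c$ (conveniently $c=u_{C^\beta\cap\omega}$ for a fixed active index $\beta$), then $|\theta^\alpha(x)-\theta^\alpha(y)|\le\|\nabla\theta^\alpha\|_\infty|x-y|\le CR$, so this term is bounded by $C\sum_\alpha|\Phi^\alpha u(y)-c|^p$. Each such quantity is estimated via Lemma \ref{LSE} and \eqref{3} (or directly by comparing mean values over the overlapping cells $C^\alpha\cap\omega$ and $C^\beta\cap\omega$, which share the large overlap region $(3Q+\alpha)\cap(3Q+\beta)\cap E$, itself of positive measure bounded below because $E$ is connected and periodic), landing again on $\int_{(\Omega\cap E)^2}|u(x)-u(y)|^p$. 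Finally, the passage from the right-hand side $\int_{(\Omega\cap E)^2}$ to $\int_{(\Omega\cap E)^2\cap D_r}$ for arbitrary $r>0$: here I would invoke that $E$ is connected with Lipschitz boundary, so $\Omega\cap E$ (a bounded domain with Lipschitz boundary, once $\Omega$ is, or after a standard covering/chaining argument) satisfies a Poincaré-type inequality allowing $\int_{(\Omega\cap E)^2}|u(x)-u(y)|^p\le c(r)\int_{(\Omega\cap E)^2\cap D_r}|u(x)-u(y)|^p$; this chaining over overlapping balls of radius comparable to $r$ inside the connected set $E$ is where the constant $c_2(r)$ acquires its $r$-dependence. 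The hard part will be bookkeeping the geometry so that all overlap multiplicities are uniform in $\alpha$ and $\e$ and that the chaining argument recovering the $D_r$-truncation is carried out with a constant independent of $\e$ — which is precisely why the distance condition $\dist(\omp,\partial\Omega)>\tilde C$ is imposed.
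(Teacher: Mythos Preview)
Your overall architecture matches the paper's: apply Lemma \ref{LemmaACMP} to get $k$ and the component $C$, apply Lemma \ref{lemma2.6} with $B=C$, $\omega'=2Q$, $\omega=3Q$ to obtain the local operator $\Phi$, translate to $\Phi^\alpha$, and glue via a periodic partition of unity. Properties \eqref{Iestimate} and \eqref{IIestimate} go through as you describe, and the splitting of $Lu(x)-Lu(y)$ into the two sums is exactly what the paper does.

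The genuine gap is in your last step, the passage to the $D_r$-truncated right-hand side. You propose to first bound everything by $\int_{(\Omega\cap E)^2}|u(x)-u(y)|^p\,dx\,dy$ and then apply a global Poincar\'e/chaining inequality on $\Omega\cap E$ to reduce to $\int_{(\Omega\cap E)^2\cap D_r}$. This does not work, for two reasons. First, $\Omega\cap E$ is not assumed connected (nor Lipschitz): even though $E$ is connected and periodic, intersecting with an arbitrary bounded open $\Omega$ can create several components, so no chaining argument is available on $\Omega\cap E$. Second, and more seriously for the purpose of the lemma, even if such a global inequality held, its constant would necessarily depend on $\operatorname{diam}(\Omega)$, whereas the statement requires $c_2$ to depend only on $E$, $d$ and $r$. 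This is not a cosmetic point: the proof of Theorem \ref{ext-thm} applies Lemma \ref{lemma2.7} with $\Omega$ replaced by $\e^{-1}\Omega$, so any $\Omega$-dependence in $c_2$ would blow up as $\e\to0$.

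The paper avoids this by performing the chaining \emph{locally}, before summing over the lattice. The key ingredient is the separate Lemma \ref{lemma:crucialest}, which proves, for each $\alpha$,
\[
\int_{((3Q\cap C)^\alpha)^2}|u(x)-u(y)|^p\,dx\,dy\le c(r)\int_{(Q_k^\alpha\cap E)^2\cap D_r}|u(x)-u(y)|^p\,dx\,dy,
\]
by building discrete paths of step $\le r/3$ entirely inside $kQ\cap E$ (using that $C$ is connected with Lipschitz boundary on $3\overline Q$). The constant $c(r)$ here depends only on $E$ and $r$. This local estimate is then fed into \emph{every} piece of the argument: the first sum in your splitting, and each of the seven cases $S_1^{\alpha,\beta},\dots,S_7^{\alpha,\beta}$ needed to control $|u^\alpha(y)-u^\beta(x)|$ in the second sum (including the comparison of the means $u_{(3Q\cap C)^\alpha}$ and $u_{(3Q\cap C)^\beta}$ that you mention). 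Only after all terms have been bounded by integrals over $(Q_{2k}^\beta\cap E)^2\cap D_r$ does one sum over $\beta$; the finite-overlap of the cubes $Q_{2k}^\beta$ then yields the global $\int_{(\Omega\cap E)^2\cap D_r}$ with a constant independent of $\Omega$. So the missing idea in your proposal is precisely this local-before-global chaining.
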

      \begin{proof}
      In view of Lemma \ref{LemmaACMP}, there exists $k\in\N$, $k\geq 4$, such that $3Q\cap E$ is contained in a single connected component $C$ of $kQ\cap E$. Since $C$ has Lipschitz boundary at each point of $C\cap 3\overline{Q}$, we can apply Lemma \ref{lemma2.6} with $B=C$, $\omega'=2Q$ and $\omega=3Q$. Hence, there exist $R>0$ and a linear and continuous operator $\Phi\hspace{0.02cm}:\hspace{0.02cm} L^p(C)\rightarrow L^p(2Q)$ defined by \eqref{defop} such that, for any $u\in L^p(C)$,
           \begin{equation}
           \label{stima1cubetto}
           \Phi(u) = u \qquad\mbox{a.e.~in}\hspace{0.3cm} C\cap 2Q,
           \end{equation}
           \begin{equation}
           \int_{2Q} |\Phi(u)|^pdx\leq c_1\int_{C\cap 3Q}|u|^pdx,           \label{stima2cubetto}
           \end{equation} 
           \begin{equation}        
            \int_{(2Q\times 2Q)\cap D_R} |\Phi(u)(x)-\Phi(u)(y)|^pdxdy\leq c_2\int_{(C\cap 3Q)\times (C\cap 3Q)}|u(x)-u(y)|^pdxdy,\label{stima3cubetto}
           \end{equation}
        where the positive constants $c_1$ and $c_2$ depend on $C$ and $2Q$.

     \noindent Let $(Q^\alpha_2)_{\alpha\in\Z^d}$ be the 
      open cover of $\R^d$ obtained by translating the cube $2Q$ by the vector $\alpha\in\mathbb{Z}^d$. For every set $\Omega\subset\R^d$, for every $\alpha\in\mathbb{Z}^d$ and for every real number $h>0$, we use the notation 
               \begin{equation}
               \label{omegaalphah}
               \Omega^\alpha_h := \alpha +h\Omega.
               \end{equation} 
         For $h=1$ we simply write $\Omega^\alpha =\Omega^\alpha_1$, while, for $\alpha=0$, $\Omega_h= \Omega_h^0$. 
         For every set $A\subseteq\R^d$, we define the set
            $$
            I(A):=\{\alpha\in\Z^d\hspace{0.03cm}:\hspace{0.03cm}  Q^\alpha_2\cap A\neq \emptyset\}.
            $$
      Since $\dist(\Omega', \partial\Omega)>\tilde{C}= 2\sqrt{d}k$, for every $\alpha\in I(\omp)$, we have that $Q^\alpha_{2k}\subset \Omega$.\\ 
      For any $\alpha\in I(\omp)$, we define the extension operator  $\Phi^\alpha \hspace{0.02cm}:\hspace{0.02cm} L^p(C^\alpha)\rightarrow L^p(Q^\alpha_2 )$   by translating the operator $\Phi$ by the integer vector $\alpha$. In other words, for any $u\in L^p(C^\alpha)$,
             \begin{equation}
             \label{Phialpha}
             \Phi^\alpha (u) := \left( \Phi (u\circ \pi^\alpha)      \right)\circ\pi^{-\alpha},
             \end{equation}
         where, for every $\alpha\in\mathbb{Z}^d$ and for every real number $h>0$, we use the notation
             \begin{equation}
             \label{pialphah}
             \pi^\alpha_h(x):=\alpha+hx \qquad\mbox{for\ }\hspace{0.03cm}x\in\R^d.
             \end{equation}
         If $h=1$, we write $\pi^\a=\pi^\a_1$ and if $\a=0$, we set $\pi_h=\pi^0_h$.  
        For simplicity, for $u\in L^p(\Omega\cap E)$ we denote by $u^\alpha$ the function 
             \begin{equation}
             \label{ualpha}
             u^\alpha:= \Phi^\alpha(u_{|C^\alpha})\in L^p(Q_2^\alpha). 
             \end{equation}
        From \eqref{defop} and \eqref{Phialpha}, the explicit expression of $\ua$ is given by 
             \begin{equation*}
             \ua(x):=
             \begin{dcases}
             u_{|_{C^\alpha}}(x), & x\in (2Q\cap C)^\alpha,\\
             \varphi(x-\alpha)u(\calR^{-1}(x-\alpha)+\alpha) + (1-\varphi(x-\alpha))u_{(3Q\cap C)^\alpha},& x\in (2Q\cap A_t)^\alpha,\\
             u_{(3Q\cap C)^\alpha},& x\in (2Q\setminus(C\cup A_t))^\alpha,
             \end{dcases}
             \end{equation*}
       where $A_t$ is given by (\ref{At}) with $B=C^\alpha$, $\omega={3Q}^\alpha$, and  $u_{(3Q\cap C)^\alpha}$ is the mean value of $u_{|_{C^\alpha}}$ over  $(3Q\cap C)^\alpha$;  \textit{i.e.},
            \begin{equation*}
            u_{(3Q\cap C)^\alpha} := \int_{(3Q\cap C)^\alpha} u_{|_{C^\alpha}} (x)dx.
            \end{equation*}
          
\noindent We now define the global extension operator  $L\hspace{0.02cm}:\hspace{0.02cm} L^p(\Omega\cap E)\rightarrow L^p(\omp)$. To this end, let $(\psi^\alpha)_{\alpha\in\Z^d}$ be a partition of unity  associated to $(Q^\alpha_2)_{\alpha\in\Z^d}$ such that $\psi^\beta=\psi^\alpha\circ\pi^{\alpha-\beta}$, for any $\alpha, \beta\in\mathbb{Z}^d$. Then, the map $L\hspace{0.03cm}:\hspace{0.03cm}L^p(\Omega\cap E)\to L^p(\Omega')$ is defined by 
              \begin{equation*}
              Lu:=\sum_{\alpha\in I(\omp)} u^\alpha\psi^\alpha, 
              \end{equation*}
     where $u^\alpha$ is given by \eqref{ualpha}. Note that $L$ is a linear and continuous operator from $L^p(\Omega\cap E)$ to $L^p(\Omega')$ and that condition \eqref{Iestimate} is satisfied. Indeed, in view of  \eqref{ualpha} and due to \eqref{stima1cubetto}, we  have
            \begin{equation*}
            Lu(x) =\sum_{\alpha\in  I(\Omega')}u^\alpha(x)\psi^\alpha(x) = \sum_{\alpha\in I(\Omega')}u(x)\psi^\alpha(x) = u(x)
            \end{equation*}     
     for a.e.~$x\in \Omega'\cap E$. \\Now, we show  condition \eqref{IIestimate}. To this end, fix $\beta\in I(\omp)$ and note that, for any $\alpha\in I(Q^\beta_2)$, we have $Q^\alpha_k\subset Q^\beta_{2k}$. Combined with estimate \eqref{stima2cubetto} and Jensen's inequality, this implies that, for any $u\in L^p(\Omega\cap E)$,
           \begin{eqnarray*}
           \int_{Q^\beta_2} |Lu|^pdx 
           & 
           \leq& N^{p-1}\sum_{\alpha\in I(Q^\beta_2)}\int_{Q^\beta_2\cap Q^\alpha_2} |u^\alpha|^pdx             \leq c_1N^{p-1}\sum_{\alpha\in I(Q^\beta_2)} \int_{(C\cap 3Q)^\alpha} |u|^pdx\\
            &\leq& c_1N^{p-1}\sum_{\alpha\in I(Q^\beta_2)} \int_{Q^\alpha_k\cap E} |u|^pdx
            \leq c_1N^{p} \int_{Q^\beta_{2k}\cap E} |u|^pdx,
           \end{eqnarray*}
       where $N$ denotes, henceforth, the cardinality of the set $I(Q^\beta_2)$. Taking the sum over $\beta\in I(\Omega')$ in the previous inequality, we deduce that
             \begin{eqnarray*}
             \int_{\omp} |Lu|^pdx&\leq& \sum_{\beta\in I(\omp)} \int_{Q^\beta_2} |Lu|^pdx\\
             &\leq& c_1N^p \sum_{\beta\in I(\omp)} \int_{Q^\beta_{2k}\cap E} |u|^pdx 
             \leq N^p {(2k)^{d}} c_1 \int_{\Omega\cap E} |u|^pdx.
             \end{eqnarray*}
     {The factor $(2k)^d$ is due to the fact that each point $x\in\R^d$ is contained in at most $(2k)^d$ cubes of the form $(Q^\beta_{2k})_{\beta\in\mathbb{Z}^d}$}.\\
     To conclude the proof, it remains to show condition \eqref{stimagrad}. To this end, we state the following estimate whose proof is given in Lemma \ref{lemma:crucialest} below: for all $r>0$ there exists a positive constant $c=c(r)$ such that 
          \begin{equation}
          \label{traslstima}
           \int_{((C\cap Q_3)^\alpha )^2} |u(x)-u(y)|^pdx dy\leq c(r) \int_{(Q^\alpha_k\cap E)^2\cap D_r} |u(x)-u(y)|^pdx dy.
          \end{equation}
     Fix $\beta\in\mathbb{Z}^d$. Since 
          \begin{equation*}
          Lu(x)-Lu(y) = \sum_{\alpha\in I(Q^\beta_2)}(\ua(x)-\ua(y))\psia(x)-\sum_{\alpha\in I(Q^\beta_2)}\ua(y)(\psia(y)-\psia(x))
          \end{equation*}
     for a.e.~$x,y\in Q^\beta_2$, an application of Jensen's inequality leads to
           \begin{align}
           \int_{(Q^\beta_2)^2\cap D_R }& |Lu(x) - Lu(y)|^pdx dy \notag\\ 
           &\leq 2^{p-1}\int_{(Q^\beta_2)^2\cap D_R } |\sum_{\alpha\in I(Q^\beta_2)} (\ua(x)-\ua(y))\psia(x) |^pdx dy\notag\\
           &\quad + 2^{p-1}\int_{(Q^\beta_2)^2\cap D_R } |\sum_{\alpha\in I(Q^\beta_2)}\ua(y)(\psia(y)-\psia(x))|^pdx dy.\label{stimaQbeta2}
           \end{align}
      Due to Jensen's inequality and in view of \eqref{stima3cubetto} and \eqref{traslstima}, the first integral is estimated as follows 
           \begin{eqnarray}\nonumber
          &&\hskip-2cm \int_{(Q^\beta_2)^2\cap D_R } |\sum_{\alpha\in I(Q^\beta_2)} (\ua(x)-\ua(y))\psia(x) |^pdx dy \notag\\
           &\leq& N^{p-1}\sum_{\alpha\in I(Q^\beta_2)} \int_{(Q^\beta_2\cap Q^\alpha_2)^2\cap D_R }|\ua(x)-\ua(y)|^pdx dy\notag\\
           &\leq& N^{p-1} \sum_{\alpha\in I(Q^\beta_2)}\int_{(Q^\alpha_2\times Q^\alpha_2)\cap D_R} |\ua(x)-\ua(y)|^p dx dy\notag\\
           &\leq& c_2N^{p-1} \sum_{\alpha\in I(Q^\beta_2)}\int_{((Q_3\cap C)^\alpha)^2} |u(x)-u(y)|^p dx dy\notag\\
           &\leq& c_2c(r)N^{p-1} \sum_{\alpha\in I(Q^\beta_2)}\int_{( Q^\alpha_{k}\cap E)^2\cap D_r} |u(x)-u(y)|^p dx dy\notag\\
           &\leq& c_2c(r)N^{p} \int_{( Q^\beta_{2k}\cap E)^2\cap D_r} |u(x)-u(y)|^p dx dy.\label{firsinte}
           \end{eqnarray}
       We evaluate the second integral.
       Since ${\rm supp}(\psia)\subset Q^\alpha_2$ for any $\alpha\in\mathbb{Z}^d$, we have that, for any $x,y\in Q^\beta_2$, 
            \begin{equation*}
            \sum_{\alpha\in I(Q^\beta_2)} (\psia(x)-\psia(y)) =0,
            \end{equation*}
       which implies that
             \begin{align*}
             \sum_{\alpha\in I(Q^\beta_2)} \ua(y) (\psia(y)-\psia(x)) &= \sum_{\alpha\in I(Q^\beta_2)} \ua(y) (\psia(y)-\psia(x)) - \ub(x)\sum_{\alpha\in I(Q^\beta_2)}(\psia(y)-\psia(x)) \\
             &= \sum_{\alpha\in I(Q^\beta_2)} (\ua(y)-\ub(x))(\psia(y)-\psia(x)),
             \end{align*}
       for a.e.~$x,y\in Q^\beta_2$. Thanks to the Jensen inequality, we obtain that 
            \begin{align}
            \int_{(Q^\beta_2)^2\cap D_R}& |\sum_{\alpha\in I(Q^\beta_2)} \ua(x) (\psia(y)-\psia(x))    |^pdx dy\notag\\
            &\leq N^{p-1}\sum_{\alpha\in I(Q^\beta_2)}\int_{(Q^\beta_2\cap Q^\alpha_2)^2\cap D_R} |\ua(y)-\ub(x)|^p |\psia(y)-\psia(x)|^pdx dy\notag\\
            &\leq cN^{p-1}\sum_{\alpha\in I(Q^\beta_2)}\int_{(Q^\beta_2\cap Q^\alpha_2)^2\cap D_R} |\ua(y)-\ub(x)|^p dx dy.\label{int}
            \end{align}
In order to estimate the integral on the right-hand side of \eqref{int}, we perform computations analogous to that of Lemma \ref{lemma2.6}. The difference is that $\ua$ and $\ub$ are extensions of $u$ which belong to two different translated cubes $Q^\alpha_2$ and $Q^\beta_2$. Hence, we separately evaluate the integral on the right-hand side of \eqref{int} on the following sets, which take into account the fact that $\ua$ and $\ub$ are the extension of $u\in L^p(\omp\cap E)$ on different translated cubes,
     \begin{align*}
     S_1^{\alpha,\beta} &= (Q^\alpha_2\cap Q^\beta_2\cap C)^2\cap D_R;\\
     S_2^{\alpha,\beta} &= (((2Q\cap C)^\alpha\cap Q^\beta_2)\times (Q^\alpha_2\cap (2Q\cap A_t)^\beta)  )\cap D_R;\\
     S_3^{\alpha,\beta} &= (((2Q\cap A_t)^\alpha\cap Q^\beta_2)\times (Q^\alpha_2\cap (2Q\cap C)^\beta))\cap D_R;\\
     S_4^{\alpha,\beta} &= ( ((2Q\cap A_t)^\alpha\cap Q^\beta_2)\times (Q^\alpha_2\cap (2Q\cap A_t)^\beta))\cap D_R;\\
     S_5^{\alpha,\beta} &= (((2Q\cap A_t)^\alpha\cap Q^\beta_2)\times (Q^\alpha_2\cap(2Q\setminus(C\cup A_t))^\beta))\cap D_R;\\
     S_6^{\alpha,\beta} &= ( ((2Q\setminus(C\cup A_t))^\alpha\cap Q^\beta_2)\times (Q^\alpha_2\cap (2Q\cap A_t)^\beta))\cap D_R;\\
     S_7^{\alpha,\beta} &= ( ((2Q\setminus(C\cup A_t))^\alpha\cap Q^\beta_2)\times (Q^\alpha_2\cap (2Q\setminus(C\cup A_t))^\beta))\cap D_R.
     \end{align*}
   Note that, as in Lemma \ref{lemma2.6}, the other combinations do not occur since $R$ is chosen such that $R<t$. \\
    Consider the case $(x,y)\in S_1^{\alpha,\beta}$. Since $u^\alpha = u^\beta$ a.e.~in $Q^\alpha_2\cap Q^\beta_2\cap C$ and due to estimate  \eqref{traslstima},  we have 
         \begin{align*}
          \int_{S_1^{\alpha, \beta}}|\ua(x)-\ub(y)|^p dx dy &=\int_{S_1^{\alpha, \beta}}|u(x)-u(y)|^p dx dy \\
          &\leq \int_{(2Q\cap C)^\beta\times (2Q\cap C)^\beta}|u(x)-u(y)|^p dx dy \\
          &\leq \int_{((Q_3\cap C)^\beta)^2}|u(x)-u(y)|^p dx dy \\
          &\leq c(r)\int_{(Q^\beta_{k}\cap E)^2\cap D_r}|u(x)-u(y)|^p dx dy\\
          &\leq c(r)\int_{(Q^\beta_{2k}\cap E)^2\cap D_r}|u(x)-u(y)|^p dx dy.
          \end{align*}
    Here, we have used the fact that $S^{\alpha, \beta}_1\subset (2Q\cap C)^\beta\times (2Q\cap C)^\beta$.\\
    Now, take $(x,y)\in S^{\alpha, \beta}_2$. Hence,
        \begin{align*}
        \ua(x)-\ub(y) &= u(x)-\varphi(y-\beta)u(\calR^{-1}(y-\beta)+\beta) -(1-\varphi(y-\beta))u_{(3Q\cap C)^\beta} \\
        &= [u(x) - u_{(3Q\cap C)^\alpha}] + [u_{(3Q\cap C)^\alpha} - u_{(3Q\cap C)^\beta}]\\
        &\quad \varphi(y-\beta)[u(\calR^{-1}(y-\beta)+\beta) -u_{(3Q\cap C)^\beta}],
        \end{align*}
     which implies that 
         \begin{align}
         \int_{S^{\alpha, \beta}_2} |\ua(x)-\ub(y)|^pdxdy &\leq 3^{p-1}|2Q\cap A_t|\int_{(2Q\cap C)^\alpha} |u(x) - u_{(3Q\cap C)^\alpha}|^pdx\notag\\
         &\quad + 3^{p-1}|2Q\cap C||2Q\cap A_t||u_{(3Q\cap C)^\alpha} - u_{(3Q\cap C)^\beta}|^p\notag\\
         &\quad + 3^{p-1} |2Q\cap C|\int_{(2Q\cap A_t)^\beta} |\varphi(y-\beta)|^p|u(\calR^{-1}(y-\beta)+\beta) -u_{(3Q\cap C)^\beta}|^pdy.\label{Sab2}
         \end{align}
        Taking Lemma \ref{SE} and estimate \eqref{traslstima} into account, we immediately deduce that 
             \begin{align}
             \int_{(2Q\cap C)^\alpha} |u(x) - u_{(3Q\cap C)^\alpha}|^pdx&\leq \int_{(3Q\cap C)^\alpha} |u(x) - u_{(3Q\cap c)^\alpha}|^pdx\notag\\
             &\leq \frac{1}{|3Q\cap C|}\int_{((Q_3\cap C)^\alpha)^2} |u(x) - u(y)|^pdxdy\notag\\
             &\leq \frac{c(r)}{|3Q\cap C|}\int_{(Q^\alpha_k\cap E)^2\cap D_r} |u(x) - u(y)|^pdxdy\notag\\
             &\leq \frac{c(r)}{|3Q\cap C|}\int_{(Q^\beta_{2k}\cap E)^2\cap D_r} |u(x) - u(y)|^pdxdy.\label{s1}
             \end{align}
         By \eqref{Jacobian}, we already know that  $\calR$ has bounded Jacobian and $R^{-1}(2Q\cap A_t)\subset (3Q\cap C)$. Then, in view of \eqref{traslstima}  and Lemma \ref{SE}, it follows, after the changes of variables $y'=y-\beta $ and then $y''=\calR^{-1}(y')+\beta$,  that  
             \begin{align}
            & \int_{(2Q\cap A_t)^\beta} |\varphi(y-\beta)|^p|u(\calR^{-1}(y-\beta)+\beta)-u_{(3Q\cap C)^\beta} ] |^pdy\notag\\
              &\hspace{2cm}= \int_{2Q\cap A_t} |\varphi(y')|^p|u(\calR^{-1}(y')+\beta)-u_{(3Q\cap C)^\beta} ] |^pdy'\notag\\
             &\hspace{2cm}\leq c_{\calR} \int_{(3Q\cap C)^\beta} |u(y'')-u_{(3Q\cap C)^\beta} |^pdy''\notag\\
             &\hspace{2cm}\leq \frac{c_\calR}{|3Q\cap C|}\int_{((Q_3\cap C)^\beta)^2 } |u(x)-u(y)|^pdxdy\notag\\ 
             &\hspace{2cm}\leq \frac{c_\calR }{|3Q\cap C|}c(r)\int_{(Q_k^\beta\cap E)^2\cap D_r} |u(x)-u(y)|^pdxdy\notag\\ 
             &\hspace{2cm}\leq \frac{c_1}{|3Q\cap C|}c(r)\int_{(Q_{2k}^\beta\cap E)^2\cap D_r} |u(x)-u(y)|^pdxdy.\label{s2}
             \end{align}
        In order to estimate the term $|u_{(3Q\cap C)^\alpha} - u_{(3Q\cap C)^\beta}|^p$, note that 
          \begin{align}
          |u_{(3Q\cap C)^\alpha} - u_{(3Q\cap C)^\beta}|^p &= \frac{1}{|3Q\cap C|^p}\left|\int_{(3Q\cap C)^\alpha\times (3Q\cap C)^\beta} u_{|_{C^\alpha}}(x)  -u_{|_{C^\beta}}(y)dxdy  \right|^p\notag\\
          &\leq \frac{1}{|3Q\cap C|^p} \int_{(3Q\cap C)^\alpha\times (3Q\cap C)^\beta} |u_{|_{C^\alpha}}(x)  -u_{|_{C^\beta}}(y)|^pdxdy. \label{diffmean} 
          \end{align}
       Since $u_{|_{C^\alpha}}= u_{|_{C^\beta}}$ a.e.~on $Q^\alpha_3\cap Q^\beta_3\cap C$, the last integral can be estimated as follows
         \begin{align*}
         &\int_{(3Q\cap C)^\alpha\times (3Q\cap C)^\beta} |u_{|_{C^\alpha}}(x)  -u_{|_{C^\beta}}(y)|^pdxdy\\
         &\quad = \frac{1}{|Q^\alpha_3\cap Q^\beta_3\cap C|}\int_{Q^\alpha_3\cap Q^\beta_3\cap C}\int_{(3Q\cap C)^\alpha\times (3Q\cap C)^\beta} |u_{|_{C^\alpha}}(x)-u(z)+u(z)  -u_{|_{C^\beta}}(y)|^pdxdydz\\
         &\quad\leq \frac{2^{p-1}|3Q\cap C|}{|Q^\alpha_3\cap Q^\beta_3\cap C|}\int_{(Q^\alpha_3\cap Q^\beta_3\cap C)\times (3Q\cap C)^\alpha}\ |u_{|_{C^\alpha}}(x)-u(z)|^pdxdz\\
         &\quad\quad + \frac{2^{p-1}|3Q\cap C|}{|Q^\alpha_3\cap Q^\beta_3\cap C|}\int_{(Q^\alpha_3\cap Q^\beta_3\cap C)\times (3Q\cap C)^\beta}\ |u_{|_{C^\beta}}(y)-u(z)|^pdydz.
         \end{align*}
    Since $Q^\alpha_3\cap Q^\beta_3\cap C$ is contained in $(3Q\cap C)^\alpha$, an application of estimate \eqref{traslstima} leads to 
        \begin{align*}
        \int_{(Q^\alpha_3\cap Q^\beta_3\cap C)\times (3Q\cap C)^\alpha}\ |u_{|_{C^\alpha}}(x)-u(z)|^pdxdz &\leq \int_{((Q_3\cap C)^\alpha)^2} |u(x)-u(z)|^pdxdz\\
        &\leq c(r)\int_{(Q^\alpha_k\cap E)^2\cap D_r} |u(x)-u(z)|^pdxdz\\
        &\leq c(r)\int_{(Q^\beta_{2k}\cap E)^2\cap D_r} |u(x)-u(z)|^pdxdz.
        \end{align*}
    Similarly, we also deduce that 
        \begin{align*}
        \int_{(Q^\alpha_3\cap Q^\beta_3\cap C)\times (3Q\cap C)^\beta}\ |u_{|_{C^\beta}}(y)-u(z)|^pdydz\leq c(r)\int_{(Q^\beta_{2k}\cap E)^2\cap D_r} |u(y)-u(z)|^pdydz.
        \end{align*}
     Finally,  from \eqref{diffmean} we get  
         \begin{equation}
         \label{s3}
         |u_{(3Q\cap C)^\alpha} -u_{(3Q\cap C)^\beta}|^p\leq \frac{2^pc(r)}{|3Q\cap C|^{p-1}|Q^\alpha_3\cap Q^\beta_3\cap C|}\int_{(Q^\beta_{2k}\cap E)^2\cap D_r} |u(x)-u(y)|^pdxdy.
         \end{equation}
     Gathering  estimates \eqref{s1}, \eqref{s2} and \eqref{s3}, from \eqref{Sab2} we conclude that 
        \begin{align*}
        \int_{S^{\alpha, \beta}_2}|\ua(x)-\ub(y)|^pdxdy \leq c_1(r) \int_{(Q^\beta_{2k}\cap E)^2\cap D_r} |u(x)-u(y)|^pdxdy,
        \end{align*}
      where $c_1(r)$ is a positive constant depending on $p, E$ and $r$.
      The same arguments also show that 
           \begin{align*}
           \int_{S^{\alpha, \beta}_3}|\ua(x)-\ub(y)|^pdxdy \leq c_1(r) \int_{(Q^\beta_{2k}\cap E)^2\cap D_r} |u(x)-u(y)|^pdxdy.
           \end{align*}
      Now consider $(x,y)\in S^{\alpha, \beta}_4$. We have that 
          \begin{align*}
          \ua(x)-\ub(y) &= \varphi(x-\alpha)[u(\calR^{-1}(x-\alpha)+\alpha)-u_{(3Q\cap C)^\alpha} ] + (u_{(3Q\cap C)^\alpha} - u_{(3Q\cap C)^\beta})\\
          &\quad \varphi(y-\beta)[u(\calR^{-1}(y-\beta)+\beta)-u_{(3Q\cap C)^\beta} ].
          \end{align*}
      In view of inequalities \eqref{s2} and \eqref{s3}, we obtain that
          \begin{align*}
          \int_{S^{\alpha, \beta}_4} |\ua(x)-\ub(y)|dxdx&\leq 3^{p-1}|2Q\cap A_t|\int_{(2Q\cap A_t)^\alpha} |\varphi(x-\alpha)|^p|u(\calR^{-1}(x-\alpha)+\alpha)-u_{(3Q\cap C)^\alpha}|^pdx\\ 
          &\quad 3^{p-1}|2Q\cap A_t|^2|u_{(3Q\cap C)^\alpha} - u_{(3Q\cap C)^\beta} |^p\\
          &\quad 3^{p-1}|2Q\cap A_t|\int_{(2Q\cap A_t)^\beta}|\varphi(y-\beta)|^p|u(\calR^{-1}(y-\beta)+\beta)-u_{(3Q\cap C)^\beta}|^pdy\\
          &\leq c_1(r)\int_{(Q^\beta_{2k}\cap E)^2\cap D_r} |u(x)-u(y)|^pdxdy,
          \end{align*}
      where $c_1$ is a positive constant depending on $p, E$ and $r$.\\
      Now, consider $(x,y)\in S^{\alpha, \beta}_5$. Hence,
          \begin{equation*}
          \ua(x)-\ub(y) = \varphi(x-\alpha)[u(\calR^{-1}(x-\alpha)+\alpha) - u_{(3Q\cap C)^\alpha}] + (u_{(3Q\cap C)^\alpha}- u_{(3Q\cap C)^\beta} ),
          \end{equation*}
      which, thanks to \eqref{s2} and \eqref{s3}, implies that  
            \begin{equation*}
            \int_{S^{\alpha, \beta}_5} |\ua(x)-\ub(y)|dxdx\leq c(r)\int_{(Q^\beta_{2k}\cap E)^2\cap D_r} |u(x)-u(y)|^pdxdy.
            \end{equation*}
      Similarly, if $(x,y)\in S^{\alpha, \beta}_6$, we have
                 \begin{equation*}
                  \int_{S^{\alpha, \beta}_6} |\ua(x)-\ub(y)|dxdx\leq c(r)\int_{(Q^\beta_{2k}\cap E)^2\cap D_r} |u(x)-u(y)|^pdxdy.
                  \end{equation*}
      If $(x,y)\in S^{\alpha, \beta}_7$, then \eqref{s3} shows the desired inequality on $S^{\alpha, \beta}_7$.
      Finally, gathering all the previous estimate on $S^{\alpha, \beta}_i$, $i=1,\dots, 7$ ,  from \eqref{int} it follows that 
          \begin{align*}
          \int_{(Q^\beta_2)^2\cap D_R}& |\sum_{\alpha\in I(Q^\beta_2)} \ua(y) (\psia(x)-\psia(y))    |^pdx dy\leq c_2(r)\int_{(Q^\beta_{2k}\cap E)^2\cap D_r} |u(x)-u(y)|^p dx dy,
          \end{align*}
     where $c_2$ denotes a positive constant depending on $E$, $p$ and $r$.
     In view of \eqref{stimaQbeta2}, the previous estimate combined with \eqref{firsinte} leads us to  
           \begin{equation*}
           \int_{(Q^\beta_2\times Q^\beta_2)\cap D_R} |Lu(x)-Lu(y)|^pdx dy \leq c_2(r)\int_{(Q^\beta_{2k}\cap E)^2\cap D_r} |u(x)-u(y)|^p dx dy,
           \end{equation*}
       with $c_2(r)$ being a positive constant depending on $p, E$ and $r$. Finally, summing up  over $\beta \in I(\Omega')$ in the last inequality, we conclude the
            \begin{align*}
            \int_{(\omp\times\omp)\cap D_R} |Lu(x)-Lu(y)|^pdx dy &\leq  \sum_{\beta\in I(\omp)}\int_{(Q^\beta_2\times Q^\beta_2)\cap D_R} |Lu(x)-Lu(y)|^pdx dy\\
            &\leq c_2(r) \sum_{\beta\in I(\omp)}\int_{(Q^\beta_{2k}\cap E)^2\cap D_r} |u(x)-u(y)|^pdx dy\\
            &\leq { (2k)^{2d}}c_2(r)\int_{(\Omega\cap E)^2\cap D_r}|u(x)-u(y)|^pdx dy,
            \end{align*}
       where $c_2(r)$ denotes the positive constant depending on $p$, $E$ and $r$ and the factor {$(2k)^{2d}$ is due to the fact that each point $(x,y)\in\R^d\times\R^d$ is contained in at most $(2k)^{2d}$ cubes of the form $(Q^\beta_{2k}\times Q^\beta_{2k})_{\beta\in \mathbb{Z}^d}$}. This concludes the proof.
      \end{proof}

The next result proves estimate \eqref{traslstima}.
\begin{lemma}
\label{lemma:crucialest}
Let $C$ be the connected component of $kQ\cap E$, $k\geq 4$, such that $3Q\cap E\subset C$ and $C$ has Lipschitz boundary at each point of $\partial C\cap 3\overline{Q}$. For any $r>0$ there exists a constant $c(r)>0$ such that the following inequality holds
     \begin{equation}
     \label{stimadiag}
     \int_{(3Q\cap C)^2} |u(x)-u(y)|^pdx dy\leq c(r) \int_{(kQ\cap E)^2\cap D_r} |u(x)-u(y)|^pdxdy.
     \end{equation}
\end{lemma}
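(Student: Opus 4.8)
The plan is to prove \eqref{stimadiag} by a covering-and-chaining argument. The left-hand side compares the values of $u$ at pairs of points of $3Q\cap C$ which may be far apart, and the idea is to write each such long-range difference as a telescoping sum of short-range ones, using the connectedness of $C$, while the Lipschitz regularity of $\partial C$ near $3\overline{Q}$ serves to keep the pieces of the covering connected. Write $\mathcal R:=\int_{(kQ\cap E)^2\cap D_r}|u(x)-u(y)|^p\,dx\,dy$ for the right-hand side; since $C\subseteq kQ\cap E$, it even suffices to bound the left-hand side by $c(r)\int_{C^2\cap D_r}|u(x)-u(y)|^p\,dx\,dy\le c(r)\,\mathcal R$. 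As a first reduction, I would fix an open ball $A_0\subset\subset C$ of diameter smaller than $r$ (it exists because $C\supseteq 3Q\cap E$ is open and nonempty) and observe that, by the elementary inequality $|a-b|^p\le 2^{p-1}(|a|^p+|b|^p)$,
\[ \int_{(3Q\cap C)^2}|u(x)-u(y)|^p\,dx\,dy\ \le\ 2^{p}\,|3Q\cap C|\int_{3Q\cap C}|u(x)-u_{A_0}|^p\,dx; \]
since $|3Q\cap C|\le 3^{d}$, it is therefore enough to prove $\int_{3Q\cap C}|u(x)-u_{A_0}|^p\,dx\le c(r)\,\mathcal R$.

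Next I would build a finite cover of $3Q\cap C$ adapted to $C$. Using the compactness of $\overline{3Q}\cap\overline{C}$, the fact that $C$ is open, bounded and connected, and that $\partial C$ is locally the graph of a Lipschitz function with a common Lipschitz constant at each point of $\partial C\cap 3\overline{Q}$, one constructs a finite family of connected open sets $W_0,W_1,\dots,W_M\subseteq C$ such that $W_0=A_0$, each $W_j$ has diameter less than $r$, $3Q\cap C\subseteq\bigcup_{j}W_j$, and $\bigcup_{j}W_j$ is connected. The two relevant local facts are that a sufficiently small ball centred at a point of $\partial C\cap 3\overline{Q}$ meets $C$ in a connected set (in the Lipschitz coordinates it is the portion of a ball lying below a Lipschitz graph), whereas at points of $C$ far from $\partial C$ one simply uses balls contained in $C$; the connectedness of $C$ is then used to join every member of the family to $A_0$ by a finite chain of balls of diameter less than $r$ contained in $C$, which are appended to the family. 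Since a connected open set covered by finitely many connected open subsets has a connected nerve, the graph $\mathcal G$ on $\{0,\dots,M\}$ with an edge between $j$ and $j'$ whenever $|W_j\cap W_{j'}|>0$ is connected, so every vertex can be joined to $0$ by a path of length at most $M$.

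Finally I would chain the mean values along $\mathcal G$. If $j$ and $j'$ are adjacent and $S:=W_j\cap W_{j'}$ (so $|S|>0$), then Jensen's inequality, the bound $|u_{W_j}-u_S|^p\le\frac1{|S|}\int_{W_j}|u_{W_j}-u(y)|^p\,dy$ together with its analogue for $W_{j'}$, Lemma \ref{LSE} applied to $W_j$ and to $W_{j'}$, and the remark that $(W_j)^2\subseteq (kQ\cap E)^2\cap D_r$ (because $W_j\subseteq C\subseteq kQ\cap E$ has diameter less than $r$) yield
\[ |u_{W_j}-u_{W_{j'}}|^p\ \le\ \frac{2^{p-1}}{|S|}\Bigl(\frac1{|W_j|}+\frac1{|W_{j'}|}\Bigr)\,\mathcal R . \]
Summing along a path from $j$ to $0$ of length at most $M$ and using that the finitely many positive numbers $|W_j|$ and $|W_j\cap W_{j'}|$ (over the edges of $\mathcal G$) are bounded below by a constant depending only on $r$, $E$ and $d$, one gets $|u_{W_j}-u_{A_0}|^p\le c(r)\,\mathcal R$ for every $j$. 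Combining this with $\int_{W_j}|u(x)-u_{W_j}|^p\,dx\le\frac1{|W_j|}\int_{(W_j)^2}|u(x)-u(y)|^p\,dx\,dy\le\frac1{|W_j|}\,\mathcal R$ (Lemma \ref{LSE} again) gives $\int_{W_j}|u(x)-u_{A_0}|^p\,dx\le c(r)\,\mathcal R$ for each $j$; summing over the finitely many indices and using $3Q\cap C\subseteq\bigcup_{j}W_j$ produces the reduced estimate of the first paragraph, and hence \eqref{stimadiag}.

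The step I expect to be the main obstacle is the construction of the cover: for \emph{every} $r>0$ one has to cover $3Q\cap C$ by finitely many connected open subsets of $C$ of diameter less than $r$ and to connect all of them to the reference ball through tubes inside $C$; it is exactly here that the two hypotheses on $C$ — connectedness and Lipschitz boundary near $3\overline{Q}$ — are indispensable. The price to pay is that the number of sets in the cover, and therefore the constant $c(r)$, degenerates as $r\to 0^{+}$, which is why the statement allows $c$ to depend on $r$.
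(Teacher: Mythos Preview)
Your argument is correct and rests on the same covering-and-chaining idea as the paper, but the two proofs organise the chaining differently. The paper works \emph{pairwise}: for any two points $\eta',\eta''\in 3Q\cap C$ it builds a discrete path $\eta_0=\eta',\eta_1,\dots,\eta_{N+1}=\eta''$ of uniformly bounded length with $|\eta_{j+1}-\eta_j|\le r_1<r/3$ and $B_{\nu r_1}(\eta_j)\subset kQ\cap C$ for the interior nodes, telescopes $u(\xi_0)-u(\xi_{N+1})$, and integrates over the balls $B_{\nu r_1}(\eta_j)$ to control $\int_{(3Q\cap C)\cap B_{\nu r_1}(\eta')}\int_{(3Q\cap C)\cap B_{\nu r_1}(\eta'')}|u-u|^p$ directly by the right-hand side; the proof finishes by covering $3Q\cap C$ with finitely many such balls and summing over all pairs. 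You instead fix a single reference set $A_0$, reduce to bounding $\int_{3Q\cap C}|u-u_{A_0}|^p$, build one fixed finite cover $\{W_j\}$ of $3Q\cap C$ by connected open subsets of $C$ of small diameter, and chain the \emph{mean values} $u_{W_j}$ along the nerve graph via Lemma~\ref{LSE}. The paper's route is slightly leaner (no reference mean, no explicit graph, and the intermediate balls sit entirely inside $C$ thanks to the parameter $\nu$ coming from the Lipschitz cone); yours is more modular, with the local Poincar\'e step on each $W_j$ and the edge estimate $|u_{W_j}-u_{W_{j'}}|^p$ cleanly separated. Both invoke the hypotheses at exactly the same place: the Lipschitz regularity of $\partial C\cap 3\overline Q$ to make the local pieces connected and of controlled measure, and the connectedness of $C$ to link them.
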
      
\begin{proof}
We adapt the proof of \cite[Lemma 3.3]{BP_per}.\\
Note that for any function $u$ the integral on the right-hand  side of \eqref{stimadiag} is an increasing function of $r$. Hence, it is sufficient to prove \eqref{stimadiag} for $r>0$ small enough.  For fixed $r>0$, there exists $r_1\in \left(0, {\tfrac{1}{3}r}\right)$ and $\nu\in (0,1]$ which depends on the Lipschitz constant of $\partial C\cap 3\overline{Q}$ such that for any two points $\eta', \eta''\in 3Q\cap C$ there exists a discrete path from $\eta'$ to $\eta''$;  \textit{i.e.}, a set of points 
    \begin{equation*}
    \eta_0=\eta', \eta_1, \dots, \eta_N, \eta_{N+1}=\eta''
    \end{equation*}
such that 
    \begin{itemize}
    \item[i) ] $|\eta_{j+1}-\eta_{j}|\leq r_1$, for $j=0,1,\dots, N$;
    \item[ii) ] for any $j=1, \dots, N$ the ball $B_{\nu r_1}(\eta_j)=\{\eta\in\R^d\hspace{0.03cm}:\hspace{0.03cm} |\eta-\eta_j|\leq \nu r_1 \}$ is contained in $kQ\cap C$;
    \item[iii) ] there exists $\overline{N}=\overline{N}(r_1)$ such that $N\leq \overline{N}$ for all $\eta', \eta''\in 3Q\cap C$.
    \end{itemize}
Let $\xi_j\in B_{\nu r_1}(\eta_j)$, for $j=1,\dots, N$. Hence, thanks to the Jensen inequality and the condition $ii)$ above, we deduce, for $\eta', \eta''\in 3Q\cap C$, 
    \begin{align}
    &\int_{(3Q\cap C)\cap B_{\nu r_1}(\eta')\times (3Q\cap C)\cap B_{\nu r_1}(\eta'')} |u(\xi_0)-u(\xi_{N+1})|^pd\xi_0d\xi_{N+1} \notag\\
    &=c_d(\nu r_1)^{-dN}\int_{B_{\nu r_1}(\eta_1)}\cdots\int_{B_{\nu r_1}(\eta_N)}\int_{(3Q\cap C)\cap B_{\nu r_1}(\eta')\times (3Q\cap C)\cap B_{\nu r_1}(\eta'')} |u(\xi_0)-u(\xi_1)+u(\xi_1)-\dots\notag\\ &\hspace{7cm}-u(\xi_N)+u(\xi_N)-u(\xi_{N+1})|^pd\xi_0d\xi_{N+1}d\xi_N\dots d\xi_1 \notag\\
    &\leq (N+1)^{p-1}c_d(\nu r_1)^{-dN}\int_{(kQ\cap E)\cap B_{\nu r_1}(\eta_0)}\cdots \int_{(kQ\cap E)\cap B_{\nu r_1}(\eta_{N+1})}\sum_{j=1}^{N+1}|u(\xi_j)-u(\xi_{j-1})|^pd\xi_0d\xi_{N+1}\dots d\xi_1\notag\\
    &=c(N+1)^{p-1}\sum_{j=1}^{N+1}\int_{(kQ\cap E)\cap B_{\nu r_1}(\eta_{j})\times (kQ\cap E)\cap B_{\nu r_1}(\eta_{j-1})}|u(\xi_j)-u(\xi_{j-1})|^pd\xi_jd\xi_{j-1} .\label{firstine}
    \end{align}
In view of assumption (i),  for $\xi_{j-1}\in (kQ\cap E)\cap B_{\nu r_1}(\eta_{j-1})$ and $\xi_{j}\in(kQ\cap E)\cap B_{\nu r_1}(\eta_{j})$, we have 
    \begin{align*}
    |\xi_j-\xi_{j-1}| &\leq |\xi_j-\eta_j|+|\eta_j-\eta_{j-1}| + |\eta_{j-1}-\xi_{j-1}|\leq 2\nu r_1+r_1\leq r,
    \end{align*}
which implies that $(kQ\cap E)\cap B_{\nu r_1}(\eta_{j})\times (kQ\cap E)\cap B_{\nu r_1}(\eta_{j-1})$ is contained in $ (kQ\cap E)^2\cap D_r$. In view of \eqref{firstine} and due to item (iii), we get
     \begin{align*}
     & c(N+1)^{p-1}\sum_{j=1}^{N+1}\int_{(kQ\cap E)\cap B_{\nu r_1}(\eta_{j})\times (kQ\cap E)\cap B_{\nu r_1}(\eta_{j-1})}|u(\xi_j)-u(\xi_{j-1})|^pd\xi_jd\xi_{j-1}\\
     &\qquad\leq c(N+1)^{p-1}\sum_{j=1}^{N+1}\int_{(kQ\cap E)^2\cap D_r}|u(\xi)-u(\eta)|^pd\xi d\eta\\
     &\qquad\leq c(N+1)^p \int_{(kQ\cap E)^2\cap D_r}|u(\xi)-u(\eta)|^pd\xi d\eta\\
     &\qquad\leq c(\overline{N}+1)^p \int_{(kQ\cap E)^2\cap D_r}|u(\xi)-u(\eta)|^pd\xi d\eta.
     \end{align*}
This implies that 
   \begin{align*}
   &\int_{(3Q\cap C)\cap B_{\nu r_1}(\eta')\times (3Q\cap C)\cap B_{\nu r_1}(\eta'')} |u(\xi_0)-u(\xi_{N+1})|^pd\xi_0d\xi_{N+1}\\ 
   &\hspace{2cm}\leq c(\overline{N}+1)^p \int_{(kQ\cap E)^2\cap D_r}|u(\xi)-u(\eta)|^pd\xi d\eta.
   \end{align*}
Covering $3Q\cap C$ with a finite number of balls of radius $\nu r_1$ and summing up the last inequality over all pairs of these balls gives the desired estimate \eqref{stimagrad}. 
\end{proof}

Now, we may prove Theorem \ref{ext-thm}.

\begin{proof}[Proof of Theorem {\rm\ref{ext-thm}}]
The proof follows the lines of that of Theorem 2.1 in \cite{ACDP}.\\
Fix $\e>0$ and set  $k_0=2\tilde{C}$.  First, let us show that there exist  $R=R(E)>0$, independent of $\e$, and a linear and continuous extension operator 
 $
              L_\e\hspace{0.02cm}:\hspace{0.02cm} L^p(\Omega\cap \e E)\rightarrow L^p(\Omega(\e k_0/2))
              $
      such that, for all $r>0$ and for any $u\in L^p(\Omega\cap\e E)$,
              \begin{equation}\label{a}
             L_\e (u)=u \quad\text{a.e.~in}\hspace{0.2cm}\Omega(\e k_0/2)\cap \e E,
             \end{equation}
              \begin{equation}\label{b}
              \int_{\Omega(\e k_0/2)} |L_\e (u)|^pdx\leq c_1\int_{\Omega \cap \e E} |u|^pdx,
              \end{equation}
                 \begin{equation}\label{c}
                 \int_{(\Omega(\e k_0/2))^2\cap D_{\e R}} |L_\e (u)(x)-L_\e (u)(y)|^pdx dy\leq c_2(r) \int_{(\Omega\cap \e E)^2\cap D_{\e r}}|u(x)-u(y)|^pdx dy.
                 \end{equation}
              To this end, note that for every $u\in L^p(\Omega\cap \e E)$, we have $u\circ \pi_\e\in L^p(\e^{-1}\Omega\cap E)$, where we use the notation   \eqref{pialphah} for the map $\pi_\e$. Moreover,  $\dist(\e^{-1}\Omega(\e k_0/2),\partial (\e^{-1}\Omega))>k_0=2\tilde{C}$. Hence, we can apply Lemma \ref{lemma2.7}, so that there exist $R= R(E)>0$,  independent of $\e$, and a linear and continuous operator
              $L\hspace{0.02cm}:\hspace{0.02cm} L^p(\e^{-1}\Omega\cap E)\rightarrow L^p(\e^{-1}\Omega(\e k_0/2))
              $ such that, for all $r>0$ and for all $u\in L^p(\e^{-1}\Omega\cap E)$, 
               $$L (u)=u, \quad\text{a.e.~in}\hspace{0.3cm} \e^{-1}\Omega(\e k_0/2)\cap  E,$$
              $$\int_{\e^{-1}\Omega(\e k_0/2)} |L (u)|^pdx\leq c_1\int_{\e^{-1}\Omega \cap  E} |u|^pdx, $$
             $$\int_{(\e^{-1}\Omega(\e k_0/2))^2\cap D_{R}} |L (u)(x)-L (u)(y)|^pdx dy\leq c_2(r) \int_{(\e^{-1}\Omega\cap  E)^2\cap D_{ r}}|u(x)-u(y)|^pdx dy,$$
             where the constants $c_1$ and $c_2$ are given by Lemma \eqref{lemma2.7} and they are, in particular, independent of $\e$.  Hence, we set $L_\e u=(L(u\circ \pi_\e))\circ \pi_{1/\e}$. Note that  $L_\e u\in L^p(\Omega(\e k_0/2))$ and \eqref{a}, \eqref{b}, \eqref{c} are satisfied.\\  Now, we define the extension operator $T_\e:L^p(\Omega\cap\e E)\to L^p (\Omega)$ by $T_\e(u) :=L_\e(u)$  a.e.~in $\Omega(\e k_0)$ and extended by zero out of $\Omega(\e k_0)$.  Hence, we have that $T_\e (u) \in L^p(\Omega)$ and \eqref{1e}, \eqref{2e} and \eqref{3e}  follow directly from \eqref{a}, \eqref{b} and \eqref{c} and this concludes the proof.
\end{proof}

\subsection{Compactness}\label{sect-comp}
In this section we prove a compactness result which in particular implies the equi-coerciveness of families of non-local functionals as those in the homogenization result in the next section.  The proof is based on the extension Theorem \ref{ext-thm} and on the following compactness result  proved in \cite{BP_ran} for the case $p=2$ and in \cite{AABPT} for general $p>1$.
\begin{thm}\label{comp} Let $\Omega$ be an open set with Lipschitz boundary, and assume
that for a family $\{w_\e\}_{\e >0},$ $w_\e\in L^p(\Omega),$ the estimate
\begin{equation}\label{bound}
\int_{\Omega(\e k)} \int_{D_R}\left|\frac{w_\e(x+\xi)-w_\e(x)}{\e}  \right|^p\, d\xi  \, dx\le c
\end{equation}
is satisfied with some $k > 0$ and $ R> 0.$ Assume moreover that the family $\{w_\e\}$ is bounded in  $L^p(\Omega).$
Then for any sequence $\e_j\to 0$  as $j\to +\infty$, and for any open subset $\Omega'\subset\subset \Omega$
the set  $\{w_{\e_j}\}_{j\in\N}$  is relatively compact in $L^p(\Omega')$ and every its limit point is in $W^{1,p}(\Omega)$.
\end{thm}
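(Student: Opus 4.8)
The plan is to prove this by the Riesz--Fr\'echet--Kolmogorov compactness route, which in the nonlocal setting is standard (it is the compactness underlying the Bourgain--Brezis--Mironescu theory and is carried out in the references quoted above): first I would upgrade the nonlocal bound \eqref{bound} to a uniform control of the $L^p$-modulus of continuity of translations of $w_\e$, then deduce relative compactness in $L^p(\Omega')$ by mollification and the Ascoli--Arzel\`a theorem, and finally identify each limit point as a function in $W^{1,p}(\Omega)$ through the classical finite-difference characterization of Sobolev spaces.

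The heart of the argument is the translation estimate: I claim that for each $\Omega'\subset\subset\Omega$ there are $\rho_0>0$ and $C>0$, depending only on $c$, $R$, $d$, $p$ and $\dist(\Omega',\partial\Omega)$, such that for every $h\in\rd$ with $|h|\le\rho_0$ and every sufficiently small $\e$,
\[
\int_{\Omega'}|w_\e(x+h)-w_\e(x)|^p\,dx\;\le\; C\,\big(|h|^p+\e^p\big).
\]
Since \eqref{bound} only controls increments at the scale $\e$, and only after averaging over the direction $\xi$, one cannot chain translations along the segment $[0,h]$. Instead I would write $h$ as a sum of $m\simeq|h|/\e$ increments of size $\lesssim\e R$ and telescope $w_\e(x+h)-w_\e(x)=\sum_{j=1}^m\big(w_\e(y_j)-w_\e(y_{j-1})\big)$ with $y_j-y_{j-1}=\e(\zeta+\xi_j-\xi_{j-1})$, $\zeta:=h/(m\e)$ (so $|\zeta|\le R/2$ once $m$ is chosen suitably), $\xi_0=\xi_m=0$, and auxiliary vectors $\xi_1,\dots,\xi_{m-1}$ ranging freely in a small ball $B_\delta$, $\delta\le R/4$; then apply Jensen's inequality in $j$, integrate in $x$ over $\Omega'$, and \emph{average over the $\xi_j$'s}. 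After the changes of variables $y=y_{j-1}$ and $\xi=\zeta+\xi_j-\xi_{j-1}\in B_R$, and after bounding the (slightly translated) domain of integration by $\Omega(\e k)$, each of the $m$ summands is bounded by $|B_\delta|^{-1}$ times the left-hand side of \eqref{bound}, hence by $|B_\delta|^{-1}c\,\e^p$; summing and using $m^p\e^p\lesssim|h|^p$ for $|h|\ge\e$ gives the claim. The delicate point, and the main obstacle, is to run this `tube averaging' uniformly in $\e$: one must keep all the intermediate points $y_j$ inside $\Omega(\e k)$ for every admissible choice of the $\xi_j$'s, which is why $\rho_0$ is forced to depend on $\dist(\Omega',\partial\Omega)$.

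Granted the translation estimate, relative compactness follows in the usual way. Fix $\Omega'\subset\subset\Omega$, pick $\Omega'\subset\subset\Omega''\subset\subset\Omega$ and, for $\sigma<\dist(\Omega',\Omega'')$, set $v_\e^\sigma:=w_\e*\rho_\sigma$ with $\rho_\sigma$ a standard mollifier of radius $\sigma$. By Young's inequality and the $L^p(\Omega)$-bound on $\{w_\e\}$, for each fixed $\sigma$ the family $\{v_\e^\sigma\}_\e$ is equibounded and equi-Lipschitz on $\overline{\Omega'}$, hence relatively compact in $L^p(\Omega')$ by Ascoli--Arzel\`a; on the other hand Jensen's inequality and the translation estimate give $\|w_\e-v_\e^\sigma\|_{L^p(\Omega')}^p\le\sup_{|z|\le\sigma}\int_{\Omega'}|w_\e(x)-w_\e(x-z)|^p\,dx\le C(\sigma^p+\e^p)$. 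For a sequence $\e_j\to0$, letting $j\to\infty$ and then $\sigma\to0$ shows that $\{w_{\e_j}\}$ is approximated in $L^p(\Omega')$, uniformly in $j$ up to an error that is infinitesimal in $\sigma$, by relatively compact families; a standard total-boundedness argument then gives that $\{w_{\e_j}\}$ is itself relatively compact in $L^p(\Omega')$.

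For the regularity of the limit, since $p>1$ and $\{w_\e\}$ is bounded in $L^p(\Omega)$, a diagonal argument over an exhaustion $\Omega'\uparrow\Omega$ produces, along a subsequence, a function $w\in L^p(\Omega)$ with $w_{\e_j}\to w$ in $L^p_{\loc}(\Omega)$ (the identification of the strong $L^p_{\loc}$-limit with the weak $L^p(\Omega)$-limit removes any ambiguity). Passing to the limit $\e_j\to0$ in the translation estimate yields $\int_{\Omega''}|w(x+h)-w(x)|^p\,dx\le C|h|^p$ for all small $h$ and all $\Omega''\subset\subset\Omega$, with $C$ independent of $\Omega''$; the classical characterization of $W^{1,p}$ through finite differences then gives $w\in W^{1,p}(\Omega'')$ with $\|\nabla w\|_{L^p(\Omega'')}\le C^{1/p}$, and, letting $\Omega''\uparrow\Omega$, $w\in W^{1,p}(\Omega)$. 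The same argument applies to any limit point of $\{w_{\e_j}\}$ in $L^p(\Omega')$, which concludes the proof.
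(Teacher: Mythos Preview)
The paper does not actually prove this theorem: it is quoted as an external result, established in \cite{BP_ran} for $p=2$ and in \cite{AABPT} for general $p>1$, and then used as a black box to derive Corollary~\ref{equico}. Your proposal therefore supplies a proof where the paper gives none.

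Your argument is correct and is precisely the standard route for such results (and, in essence, the argument in the references above): the chaining/averaging trick to upgrade the averaged $\e$-scale bound \eqref{bound} to a genuine translation estimate $\int_{\Omega'}|w_\e(x+h)-w_\e(x)|^p\,dx\le C(|h|^p+\e^p)$, Kolmogorov--Riesz compactness via mollification, and the finite-difference characterisation of $W^{1,p}$ (valid since $p>1$) for the limit. Two minor remarks. First, in your translation estimate the constant $C$ does \emph{not} depend on $\dist(\Omega',\partial\Omega)$; only $\rho_0$ and the smallness threshold on $\e$ do. This is exactly what you (correctly) exploit later when asserting that $C$ is independent of $\Omega''$, so you may want to state it that way from the start. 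Second, the averaging step needs at least one free auxiliary vector $\xi_j$, so one should always take $m\ge 2$; for $|h|\lesssim\e$ this choice yields the bound $C\e^p$ directly, which accounts for the ``$+\e^p$'' term in your estimate and justifies the case distinction you allude to.
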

\begin{corollary}\label{equico} Let  $u_\e$ be { a family  of functions in $L^p(\Omega\cap\e E)$} such that {there exists $c>0$ and $r>0$} such that $||u_\e||_{L^p(\Omega\cap\e E)}\le c$ and  
\begin{equation} \label{Fvar}
   \int_{{\{|\xi|\leq r\} }}\int_{(\Omega\cap \e E)_\e (\xi)} \Bigl|\frac{u_\e(x+\e\xi)-u_\e(x)}{\e}\Bigr|^pdx\,d\xi\le c,
   \end{equation}
for all $\e>0$, with $(\Omega\cap \e E)_\e (\xi)=\{x\in\Omega\cap \e E :x+\e\xi\in\Omega\cap \e E \}$. Then, for any sequence $\e_j\to 0$  as $j\to +\infty$, and for any open subset $\Omega'\subset\subset \Omega$
the set  $\{T_{\e_j}u_{\e_j}\}_{j\in\N}$  is relatively compact in $L^p(\Omega')$ and every its limit point is in $W^{1,p}(\Omega)$.
\end{corollary}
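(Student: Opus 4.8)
The plan is to reduce the statement to the abstract compactness Theorem~\ref{comp} via the extension Theorem~\ref{ext-thm}. Set $w_\e:=T_\e u_\e\in L^p(\Omega)$, where $T_\e$ and the constants $R=R(E)$, $k_0$, $c_1$, $c_2(r)$ are those of Theorem~\ref{ext-thm}. Since $w_{\e_j}=T_{\e_j}u_{\e_j}$ and the conclusion of Theorem~\ref{comp} is exactly relative compactness of $\{w_{\e_j}\}$ in $L^p(\Omega')$ for every $\Omega'\subset\subset\Omega$ together with membership of the limit points in $W^{1,p}(\Omega)$, it suffices to verify the two hypotheses of Theorem~\ref{comp} for the family $\{w_\e\}$. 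The first, boundedness in $L^p(\Omega)$, is immediate from \eqref{2e}: since $T_\e u_\e$ vanishes outside $\Omega(\e k_0)$,
\[
\|w_\e\|_{L^p(\Omega)}^p=\int_{\Omega(\e k_0)}|T_\e u_\e|^p\,dx\le c_1\int_{\Omega\cap\e E}|u_\e|^p\,dx\le c_1 c^p .
\]

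The second hypothesis, the difference-quotient bound \eqref{bound}, is the main step. First I would rewrite \eqref{Fvar} in two-point form: the change of variables $y=x+\e\xi$ (with Jacobian factor $\e^{-d}$) turns \eqref{Fvar} into
\[
\frac{1}{\e^{d+p}}\int_{(\Omega\cap\e E)^2\cap D_{\e r}}|u_\e(x)-u_\e(y)|^p\,dx\,dy\le c .
\]
Plugging this into \eqref{3e} of Theorem~\ref{ext-thm} (used with the same $r$) gives
\[
\int_{(\Omega(\e k_0))^2\cap D_{\e R}}|w_\e(x)-w_\e(y)|^p\,dx\,dy\le c_2(r)\,c\,\e^{d+p} .
\]
Undoing the change of variables and setting $k:=k_0+R$ then yields
\[
\int_{\Omega(\e k)}\int_{\{|\xi|\le R\}}\Bigl|\frac{w_\e(x+\e\xi)-w_\e(x)}{\e}\Bigr|^p\,d\xi\,dx\le c_2(r)\,c ,
\]
which is \eqref{bound} with a constant independent of $\e$, so Theorem~\ref{comp} applies and the corollary follows.

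The only delicate point is this last step, where one must reconcile the retracted sets appearing in \eqref{2e}--\eqref{3e} with the set $\Omega(\e k)$ in \eqref{bound}: choosing $k=k_0+R$ and using that $\Omega(\e(k_0+R))\subset\Omega(\e k_0)$ together with $\dist(x+\e\xi,\partial\Omega)\ge\dist(x,\partial\Omega)-\e|\xi|$, one checks that for $x\in\Omega(\e k)$ and $|\xi|\le R$ both $x$ and $x+\e\xi$ lie in $\Omega(\e k_0)$, so that the region of integration on the left-hand side is contained in $(\Omega(\e k_0))^2\cap D_{\e R}$. Apart from this bookkeeping and the two elementary changes of variables, I expect no genuine obstacle: all the substance is contained in Theorems~\ref{ext-thm} and~\ref{comp}, and the corollary is essentially their composition.
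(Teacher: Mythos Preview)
Your proof is correct and follows essentially the same route as the paper's: rewrite \eqref{Fvar} in two-point form, feed it into the extension estimate \eqref{3e}, revert to difference-quotient form, and invoke Theorem~\ref{comp} for $w_\e=T_\e u_\e$. Your handling of the retracted set (taking $k=k_0+R$ so that $x\in\Omega(\e k)$ and $|\xi|\le R$ force both $x$ and $x+\e\xi$ into $\Omega(\e k_0)$) is in fact slightly more careful than the paper, which passes directly from $(\Omega(\e k_0))^2\cap D_{\e R}$ to $\Omega(\e k_0)\times\{|\xi|\le R\}$ without comment.
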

\proof
Let  $u_\e$  be such that $||u_\e||_{L^p(\Omega\cap\e E)}\le c$ and  \eqref{Fvar} hold for every $\e>0$. 
From Theorem \ref{ext-thm}, the extended functions $T_\e u_\e$ satisfy the estimates
\begin{equation}\label{bound1}
\int_{\Omega(\e k_0)}|T_\e u_\e|^p\, dx\le c
\end{equation}
and
\begin{eqnarray*}&&
\frac{1}{\e^{d+p}}\int_{(\Omega(\e k_0))^2\cap D_{\e R}}
\left|T_\e u_\e(y)-T_\e u_\e(x)  \right|^p\, dy \, dx\\
&&\qquad\qquad \le { c(r) \int_{|\xi|\leq r}\int_{(\Omega\cap E)_\e(\xi)} \left|\frac{u_\e(x+\e\xi)-u_\e(x)}{\e}\right|^pdx\,d\xi}\leq c\,,
\end{eqnarray*}
for some $R>0$ independent of $\varepsilon$.
The latter, after the change of variables $y=x+\e\xi$, is equivalent to
\begin{equation}\label{bound2}
\int_{\Omega(\e k_0)}\int_{ |\xi|\le R}
\left|\frac{T_\e u_\e(x+\e\xi)-T_\e u_\e(x)}{\e}  \right|^p\, d\xi \, dx\le c,
\end{equation}
which corresponds to \eqref{bound}, for  $w_\e=T_\e u_\e$.
Using Theorem \ref{comp} for $w_\e=T_\e u_\e$ and \eqref{bound1}, \eqref{bound2}, we can conclude that for any sequence $\e_j\to 0$  as $j\to +\infty$, and for any open subset $\Omega'\subset\subset \Omega$, $T_{\e_j} u_{\e_j} $ is relatively   compact
in $L^p(\Omega')$ and   every its limit point is in $W^{1,p}(\Omega)$.\qed

\begin{rmk} \rm The limit $u$ in the previous corollary does not depend on the choice of the extension. In fact, 
if $\tilde v_\e$ is another extension of $u_\e$ and $v$ is its limit, then for any $\Omega''\subset\subset \Omega' \subset\subset \Omega$
$$
\int_{\Omega''\cap \e E}|u-v|^p\, dx\le c\int_{\Omega'}|u-\tilde u_\e|^p\, dx + c\int_{\Omega'}|\tilde v_\e-v|^p\, dx
$$
Passing to the limit as $\e\to 0$, one gets
$$
|(0,1)^d\cap E|\int_{\Omega''}|u-v|^p\, dx\le 0
$$
and concludes that $u=v$, by the arbitrariness of $\Omega''$.
\end{rmk}

\section{An application to homogenization}\label{sect-hom}
In this section we present an application of the Extension Theorem \ref{ext-thm} to the homogenization of non-local functional. Specifically, we consider a periodic integrand $h\hspace{0.02cm}:\hspace{0.02cm}\rd\times\rd\times{\mathbb{R}^m} \to [0, +\infty)$; {\it i.e.}, a Borel function such that $h(\cdot,\xi,z)$ is $[0,1]^d$-periodic for all $\xi\in\rd$ and $z\in\mathbb{R}^m$ and satisfies the following growth conditions: there exist positive constants $c_0, c_1, r_0$ and non-negative function $\ \psi:\mathbb{R}^d\rightarrow [0,+\infty)$ such that
      \begin{align}
      h(x,\xi,z)&\leq \psi(\xi)(|z|^p+1)\label{cre1}\\
       h(x,\xi,z)&\geq c_0(|z|^p-1)\label{cre3} \quad\forall |\xi|\le r_0
      \end{align}
with
    \begin{equation}\label{cre3.bis}
    \int_{\R^d}\psi(\xi)(|\xi|^p+1)\, d\xi \le c_1.
    \end{equation}  
Let $\Omega\subset\rd$ be an open set with Lipschitz boundary. 
For any $\varepsilon>0$, we introduce the non-local functional $H_\e\hspace{0.02cm}:\hspace{0.02cm} L^p(\Omega; \mathbb{R}^m)\to [0,+\infty]$ defined as
   \begin{equation}
   \label{Hvare}
   H_\e(u) = \int_{\rd}\int_{(\Omega\cap \e E)_\e (\xi)} h\left({x\over\e},\xi,\frac{u(x+\e\xi)-u(x)}{\e}\right)dx\,d\xi,
   \end{equation}
where for each set $B$, $\e>0$ and $\xi\in\R^d$, we use the notation 
\begin{equation}\label{Bex}
B_\e(\xi)= \{x\in B: x+\e\xi\in B\}
\end{equation}
Note that the integration in \eqref{Hvare} is performed  for $x,\xi$ such that both $x$ and $x+\e\xi$ belong to the perforated domain $\Omega\cap \e E $. 
Conditions \eqref{cre1}--\eqref{cre3.bis} guarantee that functionals $H_\e$ are estimated from above and below by functionals of the type \eqref{funperiod}.



{Thanks to Corollary \ref{equico}, our functionals $H_\e$ are equi-coercive with respect to the $L^p_\loc(\Omega)$-convergence upon identifying functions with their extensions from the perforated domain. More precisely, from each sequence $\{u_\e\}$ with equi-bounded energy $H_\e(u_\e)$ we can
extract a subsequence such that the corresponding extensions converge in $L^p_\loc$
to some limit $u\in W^{1,p}(\Omega)$. This is implied by  Corollary \ref{equico} applied with $r=r_0$ to each component of the vector-valued functions $u_\e$, upon noting that \eqref{cre3} implies \eqref{Fvar}. }\\
We now may state the homogenization result for the functional $H_\e$ with respect to the $L^p_{\rm loc}(\Omega;\R^m)$ convergence. 

\begin{thm}
\label{thm:Homthm}
The functionals $H_\e$  defined by \eqref{Hvare}  $\Gamma$-converge with respect to $L^p_{\rm loc}(\Omega;\R^m)$-convergence to the functional
     \begin{equation}\label{Hhom}
     H_\hom(u)=\left\{\begin{array}{ll}
                        \displaystyle\int_\Omega h_\hom(Du(x))\, dx & {\rm \ if\ } u\in W^{1,p}(\Omega;\mathbb R^m) \\\\
                        +\infty & {\rm \ otherwise,}
                      \end{array}
     \right.
     \end{equation}
with $ h_\hom$ satisfying the asymptotic formula 
 \begin{eqnarray}\nonumber\label{hhom}   h_\hom(\Xi)&=&\lim_{T\to+\infty}\frac{1}{T^d}\inf\Bigl\{\int_{(0,T)^d\cap E}\int_{(0,T)^d\cap  E}h(x,y-x, v(y)-v(x))\,  dx\, dy: \\
&&  v(x)=\Xi x \mbox{ \rm if }\hbox{\rm dist}(x,\partial (0,T)^d)<{k_0}    \Bigr\}
\end{eqnarray}
for all $\Xi\in \mathbf{M}^{m\times d}$.
Furthermore, if $h$ is convex in the third variable, the cell-problem formula
        \begin{eqnarray}\label{from-per}
h_\hom(\Xi)=\inf\Bigl\{\int_{(0,1)^d\cap E}\int_{E}h(x,y-x, v(y)-v(x))\,  dx\, dy:  v(x)-\Xi x \,\mbox{ is }\, 1\hbox{-}\mbox{periodic}\Bigr\}
\end{eqnarray}
holds.
\end{thm}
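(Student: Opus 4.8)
The plan is to view $H_\e$ as a convolution-type energy on the \emph{solid} domain $\Omega$ whose integrand happens to vanish on the complement of the perforation, and to restore coerciveness by a vanishing perturbation, so that the homogenization theorems of \cite{AABPT} apply to the perturbed functionals while the Extension Theorem~\ref{ext-thm} supplies the uniform estimates needed to send the perturbation parameter to zero. Concretely, I set $\hat h(\eta,\xi,z):=\chi_E(\eta)\,\chi_E(\eta+\xi)\,h(\eta,\xi,z)$; since $E$ is $Q$-periodic, $\hat h$ is Borel, $[0,1]^d$-periodic in $\eta$, it satisfies \eqref{cre1} with the same $\psi$, and a change of variables gives $H_\e(v)=\int_{\R^d}\int_{\Omega_\e(\xi)}\hat h\big(\tfrac x\e,\xi,\tfrac{v(x+\e\xi)-v(x)}{\e}\big)\,dx\,d\xi$ for every $v\in L^p(\Omega;\R^m)$, the perforation being hidden in the degeneracy of $\hat h$; I write $\hat H_\e$ for this functional, regarded as acting on $L^p(\Omega;\R^m)$. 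For $\delta>0$ I then consider $H^\delta_\e:=H_\e+\delta G_\e$, where $G_\e(v):=\frac1{\e^{d+p}}\int_{\Omega\times\Omega}\chi_{B_R}\big(\tfrac{y-x}\e\big)|v(y)-v(x)|^p\,dx\,dy$ is the convolution prototype with kernel $\chi_{B_R}$ ($R=R(E)$ from Theorem~\ref{ext-thm}) on the whole of $\Omega$ (the perturbation $H_\e+\delta F_\e$ of the Introduction, with the perturbing prototype taken on the solid domain). Its integrand is $h^\delta(\eta,\xi,z)=\hat h(\eta,\xi,z)+\delta\chi_{B_R}(\xi)|z|^p$, which satisfies \eqref{cre1}--\eqref{cre3.bis} with $\psi$ replaced by $\psi+\delta\chi_{B_R}$, $r_0$ by $R$ and $c_0$ by $\delta$; hence \cite{AABPT} applies and $H^\delta_\e$ $\Gamma$-converges, with respect to $L^p_\loc(\Omega;\R^m)$, to $\int_\Omega h^\delta_\hom(Du)\,dx$ on $W^{1,p}(\Omega;\R^m)$ ($+\infty$ otherwise), with $h^\delta_\hom$ given by the asymptotic formula of \cite{AABPT} for $h^\delta$, and by the cell-problem formula when $h$ (hence $h^\delta$) is convex in $z$.

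The next step is to identify $\lim_{\delta\to0}h^\delta_\hom$. Since $\chi_E(x)\chi_E(y)h(x,y-x,\cdot)$ integrated over $(0,T)^d\times(0,T)^d$ equals $h$ integrated over $((0,T)^d\cap E)\times((0,T)^d\cap E)$, the asymptotic formula for $\hat h$ is exactly the right-hand side of \eqref{hhom}; call it $h_\hom$. By monotonicity of the homogenization formula in the integrand, $\delta\mapsto h^\delta_\hom(\Xi)$ is nondecreasing and $h^\delta_\hom\ge h_\hom$, so $h^\delta_\hom\downarrow\bar h\ge h_\hom$ pointwise as $\delta\to0$; the real point is the reverse inequality $\bar h\le h_\hom$. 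Fix $\Xi$ and $\eta>0$. For $T$ large pick $v$ almost optimal in \eqref{hhom} at scale $T$, and moreover with $v(x)=\Xi x$ on a collar $\{\dist(x,\partial(0,T)^d)<\Lambda\}$ of fixed, large width $\Lambda$ (requiring the affine datum on a fatter collar does not change the value of \eqref{hhom}); by \eqref{cre3} the perforated $\chi_{B_{r_0}}$-energy of $v$ at scale $T$ is at most $C_\Xi T^d$. Extending $v$ by $\Xi x$ to a slightly larger cube and applying Theorem~\ref{ext-thm} with $\e=1$ I obtain $\tilde v$, defined on a neighbourhood of $\overline{(0,T)^d}$, equal to $v$ on $E$, with $\chi_{B_R}$-energy over that neighbourhood at most $C\,C_\Xi T^d$ by \eqref{3e}; moreover, inspecting the local formula \eqref{defop}, the reflection extension of an affine map differs from it by $O(|\Xi|)$, so $\tilde v(x)=\Xi x+O(|\Xi|)$ throughout the collar. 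Gluing $\tilde v$ in the interior to $\Xi x$ near $\partial(0,T)^d$ by a Lipschitz cutoff supported in the collar produces a competitor $w$ for $h^\delta_\hom$ at scale $T$, equal to $v$ on $E$ outside the collar; the bound $\tilde v=\Xi x+O(|\Xi|)$ makes all of $w$'s energy originating in the collar (a region of codimension one, hence of volume $O(T^{d-1})$) negligible, and together with the $\chi_{B_R}$-energy bound for $\tilde v$ and $\hat h\le\psi(|z|^p+1)$ it yields
\[
\frac1{T^d}\int_{(0,T)^d}\int_{(0,T)^d}h^\delta(x,y-x,w(y)-w(x))\,dx\,dy\le h_\hom(\Xi)+\tfrac\eta2+o(1)+\delta\,C\,C_\Xi .
\]
Letting $T\to\infty$, then $\delta\to0$, then $\eta\to0$ gives $\bar h\le h_\hom$, hence $h^\delta_\hom\downarrow h_\hom$; running the same construction against the cell-problem formula of \cite{AABPT} yields \eqref{from-per} when $h$ is convex in $z$.

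It remains to transfer the $\Gamma$-convergence from $H^\delta_\e$ to $H_\e$. Equi-coerciveness, and the fact that a $\Gamma$-limit of $H_\e$ is finite only on $W^{1,p}(\Omega;\R^m)$, are precisely Corollary~\ref{equico}. For the $\Gamma$-$\liminf$ inequality let $u_\e\to u$ in $L^p_\loc$; I may assume $\liminf_\e H_\e(u_\e)<\infty$ and, along a subsequence, $\sup_\e H_\e(u_\e)<\infty$, whence by \eqref{cre3} the perforated $\chi_{B_{r_0}}$-energies of $u_\e$ are bounded, $u\in W^{1,p}(\Omega;\R^m)$ and $T_\e u_\e\to u$ by Corollary~\ref{equico}. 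For any $\Omega'\subset\subset\Omega$ and $\e$ small (so that $\Omega'\subset\Omega(\e k_0)$, where $T_\e u_\e=u_\e$ on $\e E$), the contribution $H_\e(u_\e;\Omega')$ of interactions inside $\Omega'$ satisfies $H_\e(u_\e)\ge H_\e(u_\e;\Omega')=\hat H_\e(T_\e u_\e;\Omega')=H^\delta_\e(T_\e u_\e;\Omega')-\delta\,G_\e(T_\e u_\e;\Omega')$, and by \eqref{3e} with $r=r_0$ together with the energy bound just recalled, $G_\e(T_\e u_\e;\Omega')\le G_\e(T_\e u_\e;\Omega(\e k_0))\le C$ uniformly in $\e$; localizing the $\Gamma$-$\liminf$ inequality for $H^\delta_\e$ then gives $\liminf_\e H_\e(u_\e)\ge\int_{\Omega'}h^\delta_\hom(Du)-\delta C$, and letting $\Omega'\uparrow\Omega$, then $\delta\to0$ (using $h^\delta_\hom\downarrow h_\hom$ and monotone convergence) yields $\liminf_\e H_\e(u_\e)\ge\int_\Omega h_\hom(Du)$. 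For the $\Gamma$-$\limsup$ inequality, given $u\in W^{1,p}(\Omega;\R^m)$ take a recovery sequence $u^\delta_\e\to u$ for $H^\delta_\e$; since $H_\e(v)=\hat H_\e(v)\le H^\delta_\e(v)$ for every $v\in L^p(\Omega;\R^m)$, one gets $\limsup_\e H_\e(u^\delta_\e)\le\int_\Omega h^\delta_\hom(Du)$, and a diagonal argument over a sequence $\delta\to0$ produces a recovery sequence for $H_\e$. This proves the $\Gamma$-convergence to $H_\hom$ and the formulas \eqref{hhom} and \eqref{from-per}.

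I expect the main obstacle to be the inequality $\bar h\le h_\hom$: one has to extend an almost-optimal corrector off the perforated cube while simultaneously keeping the affine boundary datum near $\partial(0,T)^d$ and controlling the extra $\delta$-energy by $C\,T^d$ uniformly in $T$. This is exactly where the Extension Theorem~\ref{ext-thm} enters, together with the elementary but decisive remark that the reflection extension of an affine map stays within a bounded distance of it, so that the correction introduced by the cutoff near the boundary lives at the codimension-one scale and disappears in the limit $T\to\infty$.
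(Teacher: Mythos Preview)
Your overall framework coincides with the paper's: both perturb $H_\e$ to $H^\delta_\e=H_\e+\delta G_\e$ so that \cite{AABPT} applies, both use the Extension Theorem to bound $G_\e(T_\e u_\e)$ uniformly, and both run the same $\liminf$/$\limsup$ transfer to pass from $\Gamma\hbox{-}\lim H^\delta_\e=\int h^\delta_\hom(Du)$ to $\Gamma\hbox{-}\lim H_\e=\int h_0(Du)$ with $h_0=\lim_{\delta\to0}h^\delta_\hom$. The substantive difference is in the identification $h_0=h_\hom$. The paper first establishes the $\Gamma$-limit $\int h_0(Du)$, notes that $h_0$ is quasiconvex (as a $\Gamma$-limit integrand), and then, for the bound $h_0(\Xi)\le\liminf_T T^{-d}\inf\{\ldots\}$, extends almost-minimizers $v_j$ on $(0,T_j)^d$ via Lemma~\ref{lemma2.7}, rescales them to the unit cube, and applies the quasiconvexity inequality together with the already-proved $\Gamma$-$\liminf$ for $H^\delta_\e$. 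You instead work directly at the level of the cell formulas: from an almost-optimal $v$ for \eqref{hhom} at scale $T$ you manufacture a competitor for $h^\delta_\hom$ at the same scale by extending via Theorem~\ref{ext-thm} and cutting off to the affine datum, using the additional observation (read off from \eqref{defop} and the periodic partition of unity) that the extension of an affine map stays within $O(|\Xi|)$ of it, so the cutoff costs only $O(T^{d-1})$. Your route is more hands-on and avoids the quasiconvexity detour; the paper's is more structural and never needs to look inside the extension operator. One small point: in the convex case your phrase ``running the same construction against the cell-problem formula'' tacitly uses that the extension operator preserves $1$-periodicity of $v-\Xi x$ (true, by the periodic partition of unity in Lemma~\ref{lemma2.7} and the fact that $\Phi$ fixes constants); the paper instead handles this case by testing $H^\delta_\e$ on the rescaled extended corrector $v_\e(x)=\e v(x/\e)$ and using the $\Gamma$-$\limsup$ inequality.
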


\proof
In \cite{AABPT} this theorem is proved when $E=\R^d$. We will prove Theorem \ref{thm:Homthm} reducing to that case by a perturbation argument. For every $\delta\ge0$ we set 
\begin{equation*}\label{accadelta}
h^\delta(x,\xi,z)=
\chi_E(x)\chi_E(x+\xi)
\,h(x,\xi,z)+\delta\chi_{B_{R_0}}(\xi)|z|^p,
\end{equation*}
where 
$R_0>0$ 
is fixed but arbitrary, and 
  \begin{equation*}
   \label{Hvaredelta}
   H^\delta_\e(u) = \int_{\rd}\int_{\Omega_\e (\xi)} h^\delta\left({x\over\e},\xi,\frac{u(x+\e\xi)-u(x)}{\e}\right)dx\,d\xi
   \end{equation*}
is defined for $u\in L^p(\Omega; \R^m )$,  where we use the notation in \eqref{Bex} for the set $\Omega_\e (\xi)$.
Note that $H^\delta_\e\ge H_\e$, and for $\delta=0$ we have $H^0_\e=H_\e$.
In the following, for any open set $A$ and $\delta\ge 0$, we also consider the `localized' functionals
$$
H^\delta_{\e}(v,A) = \int_{\rd}\int_{A_\e (\xi)} h\left({x\over\e},\xi,\frac{u(x+\e\xi)-u(x)}{\e}\right)dx\,d\xi,
$$
where we use the notation in \eqref{Bex} for the set $A_\e (\xi)$. If $\delta=0$ we write $H_{\e}(v,A)$ in the place of $H^0_{\e}(v,A)$.

The homogenization theorem in \cite{AABPT} ensures that for all $\delta>0$ there exists the $\Gamma$-limit
$$
H^\delta_\hom(u)=\Gamma\hbox{-}\lim_{\e\to0} H^\delta_\e(u)
$$
with domain $W^{1,p}(\Omega;\R^m)$, on which it is represented as
$$
H^\delta_\hom(u)=\int_\Omega h^\delta_\hom(Du)\,dx.
$$
The energy density $ h^\delta_\hom$ satisfies 
 \begin{eqnarray*}\label{hhomdelta}  \nonumber
  h^\delta_\hom(\Xi)&=&\lim_{T\to+\infty}\frac{1}{T^d}\inf\Bigl\{\int_{(0,T)^d}\int_{(0,T)^d}h^\delta(x,y-x, v(y)-v(x))\,  dx\, dy: \\  
&&\qquad v(x)=\Xi x \mbox{ \rm if }\hbox{\rm dist}(x,\partial (0,T)^d)<{r}    \Bigr\}\,,
\end{eqnarray*}
for any fixed $r>0$, and 
 \begin{eqnarray*}\
 c_1(|\Xi|^p-1)\le  h^\delta_\hom(\Xi)\le c_2(1+|\Xi|^p)
 \end{eqnarray*}
 with $c_1,c_2$ independent of $\delta$, for $\delta\in[0,1]$. Note that the independence of $c_1$ from $\delta$ is an immediate consequence of the Extension Theorem. Indeed, let $u^\delta_\e\to \Xi x$ be such that
 $$ h^\delta_\hom(\Xi)=\lim_{\e\to 0}H^\delta_\e(u^\delta_\e,(0,1)^d).$$
Applying Corollary \ref{equico} with $\Omega=(0,1)^d$, we deduce that $T_\e u^\delta_\e$ converge to $\Xi x$ locally in $(0,1)^d$ (in particular the convergence is strong {\it e.g.}~in $({1\over 4},{3\over 4})^d$). Hence, using \eqref{cre3}, the Extension Theorem, and the liminf inequality of the $\Gamma$-limit (see {\it e.g.}~\cite{BD}) we have 
\begin{eqnarray*}
\lim_{\e\to 0}H^\delta_\e(u^\delta_\e,(0,1)^d)&\ge &\liminf_{\e\to 0}H_\e(u^\delta_\e,(0,1)^d)\\
&\ge&c_0\liminf_{\e\to 0}\Bigl({1\over \e^{p+d}}\int_{((0,1)^d\cap\e E)^2\cap D_{r_0}}|u^\delta_\e(x)-u^\delta_\e(y)|^pdxdy-1\Bigr)
\\
&\ge &{c_0\over c_2(r_0)}
\liminf_{\e\to 0}\Bigl({1\over \e^{p+d}}\int_{(({1\over 4},{3\over 4})^d)^2\cap D_{R}}|T_\e u^\delta_\e(x)-T_\e u^\delta_\e(y)|^pdxdy-1\Bigr)
\\
&\ge & {c_0\over c_2(r_0)}\min\Bigl\{{1\over 2^d}c_R,1\Bigr\}\,(|\Xi|^p-1),
\end{eqnarray*}
where in the last inequality we have used that
$$\Gamma\hbox{-}\lim_{\e\to 0}
{1\over \e^{p+d}}\int_{(({1\over 4},{3\over 4})^d)^2\cap D_{R}}|v(x)-v(y)|^pdxdy
=c_R\int_{({1\over 4},{3\over 4})^d}|\nabla v|^pdx,
$$
 where $c_R=\int_{\{|\xi|\le R\}}|\xi_1|^pd\xi$ (see \cite{AABPT}). 

Since $h^\delta_\hom$ is increasing with $\delta$, we may define
$$
h_0(\Xi)=\inf_{\delta>0}h^\delta_\hom(\Xi)=\lim_{\delta\to0^+}h^\delta_\hom(\Xi),
$$
and deduce (here we use the usual notation for the upper $\Gamma$-limit) that
\begin{equation}\label{Iloilo}
\int_\Omega h_0(Du)\,dx\ge \Gamma\hbox{-}\limsup_{\e\to0} H_\e(u)\,.
\end{equation}

If $u\in W^{1,p}(\Omega;\R^m)$ and $u_\e\to u$ with $\sup_\e H_{\e}(u_\e)<+\infty$ then for all fixed $\Omega'$ compactly contained in $\Omega$, if $R_0<R$, upon identifying $u_\e$ with its extension given by the Extension Theorem, 
we obtain that,
$$
\int_{\{|\xi|\leq R_0\}}\int_{(\Omega')_\e(\xi)}
\left|\frac{u_\e(x+\e\xi)-u_\e(x)}{\e}  \right|^p\, dx \, d\xi\le c,
$$
so that 
$$ \liminf_{\e\to 0} H_{\e}(u_\e) \ge \liminf_{\e\to 0} H_{\e}(u_\e,\Omega')
\ge  \liminf_{\e\to 0} H^\delta_{\e}(u_\e,\Omega')- \delta c.
$$
From this inequality we obtain (in terms of the lower $\Gamma$-limit) 
$$
\Gamma\hbox{-}\liminf_{\e\to0}H_{\e}(u)\ge \int_\Omega h_0(Du)dx
$$
by the arbitrariness of $\delta$ and $\Omega'\subset\!
\subset\Omega$.
Hence, recalling \eqref{Iloilo}, we have proved that
$$
\Gamma\hbox{-}\lim_{\e\to0}H_{\e}(u)= \int_\Omega h_0(Du)dx,
$$
and in particular that the $\Gamma$-limit exists as $\e\to0$ (no subsequence is involved) and it can be represented as an integral functional with a homogeneous integrand. Note moreover that the lower-semicontinuity of the $\Gamma$-limit implies that $h_0$ is quasiconvex (see \cite{BD}).

We now prove that $h_0$ coincides with $h_\hom$ given by the asymptotic formula. First, note that
\begin{eqnarray}\label{disdallalto}\nonumber
h_0(\Xi) &\ge&\nonumber\limsup_{T\to+\infty}\frac{1}{T^d}\inf\Bigl\{\int_{(0,T)^d\cap E}\int_{(0,T)^d\cap  E}h(x,y-x, v(y)-v(x))\,  dx\, dy: \\
&&\qquad v(x)=\Xi x \mbox{ \rm if }\hbox{\rm dist}(x,\partial (0,T)^d)<{r}    \Bigr\}\,.
\end{eqnarray}
If we take $r=k_0$, we obtain a lower bound for $h_0$. 

To prove the opposite inequality, for any diverging sequence $\{T_j\}$ we can consider (almost-)minimizers $v_j$ of the problems in \eqref{disdallalto} with $r=k_0$ and $T=T_j$. 
By Lemma \ref{lemma2.7} (applied componentwise) with $\Omega=(0,T)^d$ and $\Omega'=({k_0\over2},T_j-{k_0\over2})^d$, recalling that $k_0=2\widetilde C$, we can consider $\widetilde v_j=L(v_j)\in L^p(({k_0\over2},T_j-{k_0\over2})^d;\R^m)$ with $\widetilde v_j=v_j$ on $\Omega=(0,T)^d\cap E$ and 
             \begin{eqnarray*}&&\hskip-2cm\int_{({k_0\over2},T_j-{k_0\over2})^d\cap D_{R}} |\widetilde v_j(\xi)-\widetilde v_j(\eta)|^pd\xi d\eta\\
             &&\leq c_2(r_0) \int_{(0,T_j)^d\cap  E)^2\cap D_{r_0}}|v_j(\xi)-v_j(\eta)|^pd\xi d\eta\le  c\, T_j^d(1+|\Xi|^p)\end{eqnarray*}
             for some $c>0$ independent of $j$.
 Upon choosing a larger $k_0>2$ we may suppose that $\lfloor {k_0\over2}\rfloor+1< k_0$ so that we may consider
 $w_j\in L^p((0,T_j-n)^d;\R^m)$, where $n=2\lfloor {k_0\over2}\rfloor+2$, defined by
 $$
 w_j(x)= L(v_j)\Bigl(x+\Bigl(\lfloor {k_0\over2}\rfloor+1\Bigr)(1,\ldots,1)\Bigr)-\Bigl(\lfloor {k_0\over2}\rfloor+1\Bigr)\Xi(1,\ldots,1).
 $$
Having set $\e_j=T_j-n$ we can consider the scaled functions
$$
u_j(x) =\e_j w_j\Bigl({x\over\e_j}\Bigr).
$$
By the boundedness of the energies above and noting that there exists $c>0$ such that $w_j(x)=\Xi x$ if $x\in E$ and dist$(x,\partial (0,T_j-n)^d)<c$, upon extracting a subsequence, we may suppose that $u_j\to u$ and $u\in \Xi x+W^{1,p}_0((0,1)^d;\R^m)$. We may then use the quasiconvexity inequality for $h_0$ to obtain 
\begin{eqnarray*}
h_0(\Xi)&\le&\int_{(0,1)^d}h_0(Du)dx\\
&\le&\liminf_j H^\delta_{\e_j} (u_j, (0,1)^d)\\
&\le&\liminf_j H_{\e_j}(u_j,(0,1)^d)+ c\delta\\
&\le&\liminf_j {1\over (T_j-n)^d}H_{1}(w_j,(0,T_j-n)^d)+ c\delta
\\
&\le&\liminf_j {1\over (T_j-n)^d}H_{1}(v_j,(0,T_j)^d)+ c\delta\\
&=&\liminf_j {1\over (T_j-n)^d}\inf\Bigl\{\int_{(0,T_j)^d\cap E}\int_{(0,T_j)^d\cap  E}h(x,y-x, v(y)-v(x))\,  dx\, dy: \\
&&\qquad v(x)=\Xi x \mbox{ \rm if }\hbox{\rm dist}(x,\partial (0,T_j)^d)<{k_0}    \Bigr\}
+ c\delta\\
&=&\liminf_j {1\over T_j^d}\inf\Bigl\{\int_{(0,T_j)^d\cap E}\int_{(0,T_j)^d\cap  E}h(x,y-x, v(y)-v(x))\,  dx\, dy: \\
&&\qquad v(x)=\Xi x \mbox{ \rm if }\hbox{\rm dist}(x,\partial (0,T_j)^d)<{k_0}    \Bigr\}
+ c\delta.
\end{eqnarray*}
By the arbitrariness of $\delta$ and of the sequence $T_j$ we obtain the desired upper bound for $h_0$, which, together with \eqref{disdallalto}, proves the asymptotic formula.

\bigskip
In the convex case, again by the homogenization results in \cite{AABPT}, we may repeat the arguments used to get \eqref{disdallalto} to obtain the lower bound for  $h_0$  
        \begin{eqnarray}\label{from-per-lob}
h_0(\Xi)\ge\inf\Bigl\{\int_{(0,1)^d\cap E}\int_{E}h(x,y-x, v(y)-v(x))\,  dx\, dy:  v(x)-\Xi x \,\mbox{ is }\, 1\hbox{-}\mbox{periodic}\Bigr\}.
\end{eqnarray}
Note that this implies that the right-hand side is bounded from above by $c_2(1+|\Xi|^p)$.
%

Now, let $v$ be an (almost) minimizing function for \eqref{from-per-lob}, and set $v_\e(x)= \e v({x\over \e})$. After applying Theorem \ref{ext-thm} to any set $\Omega$ compactly containing $(0,1)^d$ to possibly redefine $v_\e$ outside $\e E$, we can suppose that $v_\e$ converge in $L^p((0,1)^d;\R^m)$ to $\Xi x$ and that 
$$
{1\over\e^{p+d}}\int_{((0,1)^d\times(0,1)^d)\cap D_{\e R_0}}\left| v_\e(x)-v_\e (y)     \right|^p\, dx\, dy\le c(1+|\Xi|^p).
$$
We then estimate
 \begin{eqnarray*}
h^{\delta}_\hom(\Xi)&\le&\liminf_{\e\to 0}H^\delta_\e(v_\e)\\
&\le& \int_{(0,1)^d\cap E}\int_{E}h(x,y-x, v(y)-v(x))\,  dx\, dy +c\delta (1+|\Xi|^p). 
\end{eqnarray*}
Taking the limit as $\delta\to0$, we obtain the converse inequality of \eqref{from-per-lob}, and conclude the proof.\qed

  \begin{rmk}\rm
  The function $h_{\rm hom}$ obtained in the asymptotic formula \eqref{hhom}  also satisfies 
      \begin{equation*}  
       h_\hom(\Xi)=\lim_{T\to+\infty}\frac{1}{T^d}\inf\left\{\int_{(0,T)^d\cap E}\int_{(0,T)^d\cap  E}h(x,y-x, v(y)-v(x))\,  dx\, dy: \right. 
      \end{equation*}
      $$ \left.  v(x)-\Xi x \,\mbox{is}\, (0,T)^d-\mbox{periodic}    \right\}.
      $$
  \end{rmk}
  \begin{rmk}\rm
  An example is given by the convolution functional  
      \begin{equation*}
      F_{\varepsilon}(u) = \frac{1}{\varepsilon^{d+p}}\int_{(\Omega\cap E_\varepsilon)\times (\Omega\cap E_\varepsilon)} a\left(\frac{y-x}{\varepsilon}\right)|u(x)-u(y)|^p\,dy\,dx.
      \end{equation*}
Since the integrand function $h(x,\xi, z) = a(\xi)|z|^p$ is convex in $z$, then Theorem \ref{thm:Homthm} and \eqref{from-per}  ensure that the integrand of the $\Gamma$-limit \eqref{Hhom} of $F_\e$ 
is given by
  \begin{equation*}\label{fhom}
  \inf\left\{\int_{(0,1)^d\cap E}\int_{E-\{x\}}a(\xi)|v(x+\xi)-v(x)|^p\, d\xi\, dx: v(x)-\Xi x\hbox{ is } 1{\rm-periodic}
  \right\}.
  \end{equation*}
  \end{rmk}

\subsection*{Acknowledgments}
Andrea Braides acknowledges the MIUR Excellence Department Project awarded to the Department of Mathematics, University of Rome Tor Vergata, CUP E83C18000100006.
Valeria Chiad\`o Piat and Lorenza D'Elia acknowledge the MIUR Excellence Department Project 2018-2022 awarded to the Department of Mathematical Sciences (DISMA) in the Politecnico di Torino.  All the authors are members of INdAM-GNAMPA.

\end{document}